\theoremstyle{plain}
\newtheorem{theorem}{Theorem}[section]
\newtheorem{lemma}[theorem]{Lemma}
\theoremstyle{remark}
\newtheorem{remark}[theorem]{Remark}
\newtheorem{algorithm}[theorem]{Algorithm}
\begin{document}
\allowdisplaybreaks[4]
\numberwithin{figure}{section}
\numberwithin{table}{section}
 \numberwithin{equation}{section}
%
\title[A Quadratic FEM for the 3D-Obstacle Problem]
 {Bubbles Enriched Quadratic Finite Element Method for the 3D-Elliptic Obstacle Problem}

\author{Sharat Gaddam}
\address{Department of Mathematics, Indian Institute of Science, Bangalore - 560012}
\email{sharat12@math.iisc.ernet.in}

\author{Thirupathi Gudi}
\address{Department of Mathematics, Indian Institute of Science, Bangalore - 560012}
\email{gudi@math.iisc.ernet.in}

\date{}
\begin{abstract}
Optimally convergent (with respect to the regularity) quadratic
finite element method for two dimensional obstacle problem on
simplicial meshes is studied in (Brezzi, Hager, Raviart, Numer.
Math, 28:431--443, 1977). There was no analogue of a quadratic
finite element method on tetrahedron meshes for three dimensional
obstacle problem. In this article, a quadratic finite element
enriched with element-wise bubble functions is proposed for the
three dimensional elliptic obstacle problem. A priori error
estimates are derived to show the optimal convergence of the
method with respect to the regularity. Further a posteriori error
estimates are derived to design an adaptive mesh refinement
algorithm. Numerical experiment illustrating the theoretical
result on {\em a priori} error estimate is presented.
\end{abstract}
\keywords{finite element, quadratic fem, 3d-obstacle problem,
error estimates, variational inequalities, Lagrange multiplier}
\subjclass{65N30, 65N15}
\maketitle
\allowdisplaybreaks
\def\R{\mathbb{R}}
\def\cA{\mathcal{A}}
\def\cK{\mathcal{K}}
\def\cN{\mathcal{N}}
\def\p{\partial}
\def\O{\Omega}
\def\bbP{\mathbb{P}}
\def\cV{\mathcal{V}}
\def\cM{\mathcal{M}}
\def\cT{\mathcal{T}}
\def\cE{\mathcal{E}}
\def\bF{\mathbb{F}}
\def\bC{\mathbb{C}}
\def\cR{\mathcal{R}}
\def\bN{\mathbb{N}}
\def\ssT{{\scriptscriptstyle T}}
\def\HT{{H^2(\O,\cT_h)}}
\def\mean#1{\left\{\hskip -5pt\left\{#1\right\}\hskip -5pt\right\}}
\def\jump#1{\left[\hskip -3.5pt\left[#1\right]\hskip -3.5pt\right]}
\def\smean#1{\{\hskip -3pt\{#1\}\hskip -3pt\}}
\def\sjump#1{[\hskip -1.5pt[#1]\hskip -1.5pt]}
\def\jumptwo{\jump{\frac{\p^2 u_h}{\p n^2}}}

\section{Introduction}\label{sec:Intro}
The obstacle problem appears in the study of elliptic variational
inequalities with applications in contact mechanics, option
pricing and fluid flow problems. Generally, the obstacle problem
exhibits free boundary along which the regularity of the solution
is influenced. The location of the free boundary is not {\em a priori}
known and it forms a part of the numerical approximation. This
makes the finite element approximation of this problem an
interesting subject as it offers challenges both in the theory and
the computation. we refer to the books
\cite{AH:2009:VI,Glowinski:2008:VI,KS:2000:VI,Sutt:book} for the
theoretical and numerical aspects of variational inequalities. The
finite element analysis of the obstacle problem started in 1970's,
see \cite{BHR:1977:VI,Falk:1974:VI}. Subsequently there has been a
tremendous progress on the subject, see
\cite{BLY:2012:C0IP,BLY:2012:C0IP1,HK:1994:multiadaptive,Wang:2002:P2VI,WHC:2010:DGVI}
for the convergence analysis of finite element methods for the
obstacle problem and see
\cite{Belgacem:2000:SINUM,DH:2015:Signorini,HR:2012:Signorini,HW:2005:Signorini}
for the Signorini contact problem. The adaptive finite element
methods play an important role in improving the accuracy of the
numerical solution in an efficient way. A posteriori error
estimates are key tools in the design of adaptive schemes, see
\cite{AO:2000:Book} for the theory of a posteriori error analysis.
In the context of the obstacle problem there has been a lot of
work, see
\cite{AOL:1993:Apost,BC:2004:VI,Braess:2005:VI,CN:2000:VI,GKVZ:2011:VeeserHirarchy,TG:2015:VIP2,NSV:2005:Apost,NPZ:2010:VI,Veeser:2001:VI,WW:2010:Apost}
and see
\cite{BS:2014:hp-apost,TG:2014:VIDG,TG:2014:VIDG1,Gwinner:2009:pfem,Gwinner:2013:pfem,WHE:2015:ApostDG}{}.
Further,  the convergence of adaptive methods based on a
posteriori error estimates is also studied recently, see
\cite{BCH:2007:VI,BCH:2009:AVI,FPP:2014:Obstacle,SV:2007:APost,PP:2013:AconvVI}.
Further, we refer to
\cite{BS:2000:VI,HN:2005:Signorini,AS:2011:Contact,WW:2009:Contact}
for the work related to the Signorini contact problem.

The contribution of this article is on the design and analysis of
a quadratic finite element method for the three dimensional
elliptic obstacle problem. The work in
\cite{BHR:1977:VI,Wang:2002:P2VI} and \cite{TG:2015:VIP2} is for a
quadratic finite element method (FEM) for the two dimensional
obstacle problem. The quadratic FEM in two dimensions is based on
the discrete constraints at the midpoints of the edges of the
triangles. These constraints are shown to be enough to guarantee
the convergence of the method at the rate that is optimal with
respect to the regularity of the solution. The key idea in a
priori error estimates in \cite{BHR:1977:VI,Wang:2002:P2VI} can
realized to be is that if a quadratic function $v$ is nonnegative
at the midpoints of a triangle $T$, then the integral of $v$ on
$T$ is nonnegative. This is a simple fact from the observation
that the integral of a canonical $P_2$-nodal basis function
correspond to a vertex on $T$ is zero. This guides to consider the
constraints at the midpoints of the edges only. However the same
principle cannot be extended to three dimensional domains as the
integral of a canonical $P_2$-nodal basis function corresponding
to a vertex is negative.
The remedy we adopt in this article is by enriching the
$P_2$-finite element space with element-wise bubble functions and
then considering the constraints on the integral mean values over
each simplex in the mesh. The a priori error analysis is performed
to show the convergence of the scheme. Further a posteriori error
estimates are derived to design an adaptive finite element scheme.
In the literature, there are $hp$-finite element methods
available for the obstacle problem
\cite{BS:2014:hp-apost,Gwinner:2009:pfem,Gwinner:2013:pfem}, but
they use rectangular elements which are not well-suited for the
adaptive mesh refinement algorithms.

Let $\Omega\subset \R^3$ be a bounded polyhedral domain with
boundary $\partial\Omega$. Assume that the load function $f\in
L^2(\Omega)$ and the obstacle $\chi \in C(\bar\Omega)\cap
H^1(\Omega)$ satisfying $\chi|_{\partial\Omega}\leq 0$. We will
also assume additional regularity on $f$ and $\chi$ in the subsequent a
priori error analysis. The admissible closed and convex set for
the solution is defined by
\begin{align*}
\cK=\{v\in H^1_0(\Omega): v\geq \chi \text{ a.e. in } \Omega\}.
\end{align*}
Note that since $\chi^+=\max\{\chi,0\}\in \cK$, the set $\cK$ is
nonempty. We consider the model problem of finding $u\in \cK$ such
that
\begin{align}\label{eq:MP}
a(u,v-u)\geq (f,v-u)\;\; \text{ for all } v\in \cK,
\end{align}
where for simplicity $a(u,v)=(\nabla u,\nabla v)$. Hereafter,
$(\cdot,\cdot)$ denotes the $L^2(\Omega)$ inner-product. We denote
by  $\|\cdot\|$  the $L^2(\Omega)$ norm. The  result of
Stampacchia \cite{AH:2009:VI,Glowinski:2008:VI,KS:2000:VI} implies
the existence of a unique solution to \eqref{eq:MP}.

\par
\noindent For the  a posteriori error analysis, we make use of the
Lagrange multiplier $\sigma\in H^{-1}(\Omega)$ defined by
\begin{align}\label{eq:sigmadef}
\langle \sigma, v\rangle =(f,v)-a(u,v)\;\;\text{ for all } v\in
H^1_0(\Omega),
\end{align}
where $\langle \cdot,\cdot\rangle$ denotes the duality bracket of
$H^{-1}(\Omega)$ and $H^1_0(\Omega)$. It is useful to note from
\eqref{eq:sigmadef} and \eqref{eq:MP} that
\begin{equation}\label{eq:sigma}
\langle \sigma, v-u\rangle\leq 0\; \text{ for all } v \in \cK.
\end{equation}

The rest of the article is organized as follows. In the Section
\ref{sec:DP}, we introduce the notation, preliminaries, the
discrete problem and the Lagrange multiplier for a posteriori
error estimates. In the Section \ref{sec:Apriori} and
\ref{sec:Aposteriori}, we derive a priori and a posteriori error
estimates, respectively. In the Section
\ref{sec:Numerical}, we propose a primal dual active set algorithm
for solving the discrete problem and subsequently present a
numerical experiment. Finally we conclude the article in the Section
\ref{sec:Conclusions}.

\section{Discrete Problem}\label{sec:DP}
\subsection{Preliminaries.} Let $\cT_h$ be a regular triangulation of $\Omega$ with simplices (tetrahedrons).
A generic tetrahedron (simplex) is denoted by $T$ and its diameter
and volume by $h_T$ and $|T|$, respectively. Set $h=\max\{h_T :
T\in\cT_h\}$. The set of all vertices of tetrahedrons that are
inside $\Omega$ is denoted by $\cV_h^i$. The set of all vertices
that are on the boundary $\partial\Omega$ is denoted by $\cV_h^b$.
Set $\cV_h=\cV_h^i\cup\cV_h^b$. We also use $\cV_T$ to denote the
set of four vertices of the tetrahedron $T$. Let $\cM_h^i$ (resp.
$\cM_h^b$) be the set of all midpoints of the interior (resp. boundary) edges of
$\cT_h$ and set $\cM_h=\cM_h^i\cup\cM_h^b$. Further, we denote the
set of midpoints of the six edges of $T$ by $\cM_T$. The set of
all interior faces is denoted by $\cE_h^i$. Finally, we denote the
diameter of a generic face $e\in\cE_h^i$ by $h_e$.
\par
For any $e\in\cE_h^i$, there are two simplices $T_+$ and $T_-$
such that $e=\partial T_+\cap\partial T_-$. Let $n_-$ be the unit
normal of $e$ pointing from $T_-$ to $T_+$, and $n_+=-n_-$. For
any $v$ which is piecewise smooth, we define the jump of $\nabla
v$ on $e$ by
\begin{eqnarray*}
\sjump{\nabla v} = \nabla v_-\cdot n_-+\nabla v_+\cdot n_+,
\end{eqnarray*}
where $v_\pm=v|_{T_\pm}$ and $v|_T$ denotes the restriction of $v$
to $T$.

\par
For any $T\in\cT_h$ and $v\in L^1(T)$, define
\begin{align*}
A_T(v)= \frac{1}{|T|}\int_T v(x)\,dx.
\end{align*}

\par
Let $V_{pc,h}=\{v\in L^1(\Omega): v|_T\in \bbP_0(T)\text{ for all
} T\in\cT_h\},$ where $\bbP_r(T)$ denotes the space of polynomials
of total degree less than or equal to $r$. Define
$A_h:L^1(\Omega)\rightarrow V_{pc,h}$ by $A_h(v)|_T=A_T(v)$ for
all $v\in L^1(\Omega)$.

\subsection{Discrete Problem}

Before defining the finite element space, we define for each
simplex $T\in \cT_h$ a $P_4(T)$ bubble function $b_T$ by
\begin{align}\label{eq:bT}
b_T=256\,\lambda_1^T\lambda_2^T\lambda_3^T\lambda_4^T,
\end{align}
where $\lambda_i^T$( for $i=1,2,3,4$) is the barycentric
coordinate of $T$ associated with the vertex $a_i \in\cV_T$.
Define the spaces
$$W_h=\{v_h\in H^1_0(\Omega): v_h|_{T} \in \bbP_2(T)\text{ for
all } T\in\cT_h\},$$
 and
$$B_h=\{v_h\in H^1_0(\Omega): v_h|_{T}\in \text{span}\{b_T\}  \text{ for
all } T\in\cT_h\}.$$
\smallskip
\par\noindent The finite element space  $V_h$ for approximating the
obstacle problem is defined by
$$V_h=W_h\oplus B_h.$$

\par\noindent
Define the discrete set
\begin{align*}
\cK_h=\{v_h\in V_h: A_h(v_h) \geq A_h(\chi)\}.
\end{align*}
The discrete problem consists of finding $u_h\in\cK_h$ such that
\begin{align}\label{eq:DP}
a(u_h,v_h-u_h)\geq (f,v_h-u_h)\,\text{ for all } v_h\in \cK_h.
\end{align}

\par
\noindent In the subsequent discussion we show that the above
discrete problem \eqref{eq:DP} has a unique solution by showing
that the discrete set $\cK_h$ is non-empty.

\par
\noindent {\bf Interpolation} $I_h$: Define an interpolation
operator $I_h:C(\bar\Omega)\rightarrow V_h$ by the following: Let
$v\in C(\bar\Omega)$ and define $I_hv$ by its nodal values
\begin{align}
I_hv(p)&=v(p) \quad\forall\; p\in \cV_h\cup\cM_h, \\
A_T(I_hv)&=A_T(v) \quad\;\forall T\in\cT_h.
\end{align}
\par
\noindent Define $I_T$ by $I_Tv=(I_hv)|_T$ for $v\in
C(\bar\Omega)$. The interpolation operator $I_h$ is well-defined
and satisfies $I_Tv=v$ for any $v\in \bbP_2(T)$. Therefore the
following approximation properties hold by the Bramble-Hilbert
Lemma and scaling \cite{BScott:2008:FEM,Ciarlet:1978:FEM}:
\begin{lemma}\label{lem:Approx}
Let $v\in H^s(T)$ for $2\leq s\leq 3$ and $T\in\cT_h$ . Then
\begin{align*}
|v-I_Tv|_{H^m(T)}&\leq C h_T^{s-m} |v|_{H^s(T)}, for \quad 0\leq
m\leq s,\\
\|v-A_T(v)\|_{L^2(T)}&\leq C h_T^r |v|_{H^r(T)},
\end{align*}
where $0\leq r\leq 1$.
\end{lemma}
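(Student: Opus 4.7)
The plan is to establish both estimates by the classical affine-scaling / Bramble--Hilbert paradigm, the only mildly nonstandard point being the enriched local space.

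First I would verify that the local interpolation operator $I_T$ on $V_h|_T = \bbP_2(T)\oplus\text{span}\{b_T\}$ is well-defined. The prescribed degrees of freedom --- values at the $4$ vertices, values at the $6$ edge midpoints, and the integral mean $A_T(\cdot)$ --- give $11$ linear functionals, matching $\dim\bbP_2(T)+1=11$. Unisolvence follows because if $v\in V_h|_T$ vanishes at all vertices and midpoints, then its $\bbP_2$ component is zero (these being the standard $\bbP_2$-Lagrange nodes), so $v=c\,b_T$ for some $c\in\R$; the constraint $A_T(v)=0$ together with $\int_T b_T\,dx \neq 0$ forces $c=0$. The same argument shows the stated polynomial reproduction $I_Tv=v$ for every $v\in\bbP_2(T)$.

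Next, I would pull back to a reference simplex $\hat T$ via an affine map $F_T(\hat x)=B_T\hat x+b_T^{\mathrm{aff}}$ with $\|B_T\|\sim h_T$ and $|\det B_T|\sim h_T^3$. Since $s\geq 2>3/2$, the Sobolev embedding $H^s(\hat T)\hookrightarrow C(\bar{\hat T})$ gives meaning to the pointwise evaluations in $\hat I_{\hat T}$ and provides continuity $\hat I_{\hat T}:H^s(\hat T)\to H^m(\hat T)$. Because $\hat I_{\hat T}$ reproduces $\bbP_2(\hat T)$ and $s-1\leq 2$, the Bramble--Hilbert lemma yields
$$
|\hat v - \hat I_{\hat T}\hat v|_{H^m(\hat T)} \leq C\,|\hat v|_{H^s(\hat T)},\qquad 0\leq m\leq s.
$$
Applying the standard scaling $|w|_{H^k(T)}\sim h_T^{3/2-k}|\hat w|_{H^k(\hat T)}$ to both sides produces exactly the factor $h_T^{s-m}$, giving the first estimate.

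For the second estimate, $A_T$ reproduces constants, so at $r=1$ the scaled Poincar\'e inequality on a shape-regular simplex gives $\|v-A_Tv\|_{L^2(T)}\leq Ch_T\,|v|_{H^1(T)}$, while at $r=0$ Cauchy--Schwarz yields $\|A_Tv\|_{L^2(T)}\leq \|v\|_{L^2(T)}$ and hence $\|v-A_Tv\|_{L^2(T)}\leq 2\|v\|_{L^2(T)}$. The fractional range $0<r<1$ follows by real interpolation between these two endpoints.

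The main obstacle --- such as it is --- is (i) the unisolvence check for the enriched degree-of-freedom set, which is precisely what justifies the inclusion of the bubble $b_T$ and relies on $\int_T b_T\,dx\neq 0$, and (ii) carefully tracking the $3$D scaling exponents in the affine transfer so that $h_T^{s-m}$ emerges with the correct power. Everything else is routine.
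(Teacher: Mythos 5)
Your proposal is correct and follows exactly the route the paper intends: the paper offers no detailed proof of Lemma \ref{lem:Approx}, asserting only that well-posedness of $I_T$ and the reproduction property $I_Tv=v$ for $v\in\bbP_2(T)$ yield the estimates via the Bramble--Hilbert lemma and scaling, citing \cite{BScott:2008:FEM,Ciarlet:1978:FEM}. You have simply filled in the standard details (the $11$-functional unisolvence check using $\int_T b_T\,dx\neq 0$, the Sobolev embedding for $s\geq 2$, the $3$D scaling exponents, and the Poincar\'e/interpolation argument for $A_T$), all of which are accurate.
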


\par
\noindent We remark here that in the subsequent {\em a priori}
error analysis, the interpolation $I_h$ gives good control for the
terms near the free boundary, see for example \eqref{eq:FreeBterm}, apart from
preserving the integral sign.

\par Since $u\geq \chi$, it is clear that $I_hu\in \cK_h$
and hence the set $\cK_h$ is nonempty. Now as in the case of
continuous problem \eqref{eq:MP}, the discrete problem
\eqref{eq:DP} can be shown to have a unique solution.
The {\em a posteriori} error analysis will make use of a discrete
Lagrange multiplier $\sigma_h$ analogous to $\sigma$ in
\eqref{eq:sigmadef}. Before defining it, we note the following
facts about the discrete solution $u_h$:

\par
Let $z_h\in V_h$ with $A_h(z_h)\geq 0$. Then, we have $u_h+z_h \in
\cK_h$. By taking $v_h=u_h+z_h$ in \eqref{eq:DP}, we find
\begin{align}\label{eq:DPProperty}
a(u_h,z_h)\geq (f,z_h).
\end{align}
Let $v_h\in V_h$ with $A_h(v_h)=0$. Then, by taking $z_h =\pm v_h$
in \eqref{eq:DPProperty}, we find
\begin{align}\label{eq:DPProperty1}
a(u_h,v_h)=(f,v_h).
\end{align}
Suppose for any $T\in\cT_h$,  $A_T(u_h)>A_T(\chi)$. Then by taking
$v_h^\pm=u_h\pm \delta b_T$ for some sufficiently small
$\delta>0$, we find
\begin{align*}
a(u_h,b_T)=(f,b_T),
\end{align*}
where $b_T$ is the bubble function defined in \eqref{eq:bT} on $T$
and extended by zero on $\bar\Omega\backslash T$. Therefore
\begin{align}\label{eq:DPProperty2}
a(u_h,b_T)=(f,b_T) \;\; \text{ for all }\; T \in \{T' \in\cT_h
:A_{T'} (u_h)>A_{T'}(\chi)\}.
\end{align}

\begin{lemma}\label{lem:Pih}
The map $\Pi_h:V_h\rightarrow V_{pc,h}$ defined by
$\Pi_h(v_h)=A_h(v_h)$  is onto and hence an inverse map
$\Pi_h^{-1}:V_{pc,h}\rightarrow V_h$ can be defined into a subset
of $V_h$ such that $\Pi^{-1}_h(w_h)=v_h$ where $v_h\in V_h$ with
$A_h(v_h)=w_h$ for $w_h\in V_{pc,h}$.
\end{lemma}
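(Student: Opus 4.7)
The plan is to prove surjectivity constructively using only the bubble part $B_h$ of $V_h=W_h\oplus B_h$: since the bubbles $b_T$ have essentially disjoint supports and positive mean on their own element, they provide an immediate parametrization of $V_{pc,h}$ by element averages.

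First I would compute $A_T(b_T)$ from the definition \eqref{eq:bT}. Using the standard barycentric integration identity
\begin{align*}
\int_T (\lambda_1^T)^{\alpha_1}(\lambda_2^T)^{\alpha_2}(\lambda_3^T)^{\alpha_3}(\lambda_4^T)^{\alpha_4}\,dx=\frac{\alpha_1!\,\alpha_2!\,\alpha_3!\,\alpha_4!\,3!}{(\alpha_1+\alpha_2+\alpha_3+\alpha_4+3)!}\,|T|,
\end{align*}
with each $\alpha_i=1$ gives $\int_T \lambda_1^T\lambda_2^T\lambda_3^T\lambda_4^T\,dx=|T|/840$, so $A_T(b_T)=256/840=32/105$. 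The crucial point is that this number is strictly positive and, in particular, independent of the shape and size of $T$.

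Next, given any $w_h\in V_{pc,h}$ with $w_h|_T=c_T$, I would define
\begin{align*}
v_h:=\sum_{T\in\cT_h}\frac{c_T}{A_T(b_T)}\,b_T = \frac{105}{32}\sum_{T\in\cT_h}c_T\,b_T.
\end{align*}
Each $b_T$ vanishes on $\partial T$ (being a product of barycentric coordinates), so $b_T\in H^1_0(\Omega)$ after extension by zero, and by construction $v_h\in B_h\subset V_h$. Since the supports of the $b_T$ overlap only on a set of measure zero, for any $T'\in\cT_h$
\begin{align*}
A_{T'}(v_h)=\frac{1}{|T'|}\int_{T'}\frac{c_{T'}}{A_{T'}(b_{T'})}\,b_{T'}\,dx = c_{T'} = w_h|_{T'},
\end{align*}
so $\Pi_h(v_h)=A_h(v_h)=w_h$. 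This proves surjectivity, and we may define $\Pi_h^{-1}(w_h):=v_h$; its image lies inside $B_h$, hence in a proper subset of $V_h$, consistent with the statement.

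There is no serious obstacle here; the only thing to verify is the positivity (equivalently, non-vanishing) of the single scalar $A_T(b_T)$, which is what legitimizes the pointwise inversion on each simplex. The disjointness of the bubble supports then makes the global inversion automatic, and the choice of $\Pi_h^{-1}$ as the bubble-only lift is well-defined precisely because $B_h\cap\{v_h: A_h(v_h)=0\}=\{0\}$.
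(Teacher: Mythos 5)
Your proof is correct and takes essentially the same route as the paper: the authors also construct the preimage purely from the bubble part, choosing $v_1=0$ in $W_h$ and $v_2|_T=w_h\,b_T/A_h(b_T)$ in $B_h$. Your explicit evaluation $A_T(b_T)=256\cdot|T|/840\cdot|T|^{-1}=32/105>0$ is a welcome verification of the nondegeneracy that the paper leaves implicit, but it does not change the argument.
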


\begin{proof}
For given any $w_h\in V_{pc,h}$, we prove that there is some
$v_h\in V_h$ such that $A_h(v_h)=w_h$. Note that, we can write
$v_h\in V_h$ as $v_h=v_1+v_2$, where $v_1\in W_h$ and $v_2\in
B_h$. We choose first some $v_1\in W_h$, and then we choose
$v_2\in B_h$ such that $A_h(v_2)=w_h-A_h(v_1)$.  In particular, we
can choose $v_1$ to be zero and $v_2$ to be such that $v_2\in B_h$
with $v_2|_T=w_h b_T/A_h(b_T)$. This proves that $\Pi_h$ is onto.
Define $\Pi_h^{-1}$ by $\Pi_h^{-1}(w_h)=v_h$ where $v_h\in V_h$ is
such that $A_h(v_h)=w_h$ for $w_h\in V_{pc,h}$. Again $v_h$ can be
chosen such that $v_h|_T=w_h b_T/A_h(b_T)$.
\end{proof}

\par
\noindent Define the discrete Lagrange multiplier $\sigma_h\in
V_{pc,h}$ by
\begin{align}\label{eq:sigmahdef}
(\sigma_h,w_h)=(f,\Pi_h^{-1}w_h)-a(u_h,\Pi_h^{-1}w_h)\quad
\forall\, w_h\in V_{pc,h},
\end{align}
where $\Pi_h^{-1}$ is defined as in Lemma \ref{lem:Pih}.

\par
\noindent
The following lemma proves some properties of $\sigma_h$.

\begin{lemma}\label{lem:sigmaprops}
The discrete Lagrange multiplier defined by $\eqref{eq:sigmahdef}$
is well-defined. Further
\begin{align}\label{eq:sigmahP1}
\sigma_h\leq 0  \text{  on  } \bar\Omega,
\end{align}
and
\begin{align}\label{eq:sigmhaP2}
\sigma_h|_T=0\;\; \text{ for all }\; T \in \{T' \in\cT_h :A_{T'}
(u_h)>A_{T'}(\chi)\}.
\end{align}
\end{lemma}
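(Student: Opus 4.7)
The plan is to verify the three claims in order, beginning with well-definedness and then deriving the sign and vanishing properties by judicious test choices in $V_{pc,h}$.

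For \emph{well-definedness}, the point to watch is that $\Pi_h$ is not injective, so a priori the right-hand side of \eqref{eq:sigmahdef} could depend on the particular preimage selected by $\Pi_h^{-1}$. I would show that it does not: if $v_h^1,v_h^2\in V_h$ both satisfy $A_h(v_h^i)=w_h$, then their difference $v_h^1-v_h^2\in V_h$ has $A_h(v_h^1-v_h^2)=0$, so by \eqref{eq:DPProperty1} applied to $v_h=v_h^1-v_h^2$ we get $a(u_h,v_h^1-v_h^2)=(f,v_h^1-v_h^2)$, i.e., the functional $z\mapsto (f,z)-a(u_h,z)$ depends only on $A_h(z)$. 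This is the only subtle point; I expect it to be the main obstacle, since it is what justifies using \emph{any} convenient preimage (in particular the bubble preimage from Lemma \ref{lem:Pih}) in the remaining two arguments.

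For the \emph{sign property} \eqref{eq:sigmahP1}, since $\sigma_h\in V_{pc,h}$ is piecewise constant, it suffices to test with $w_h=\mathbf{1}_T$ for each $T\in\cT_h$. Using the preimage $v_h=\Pi_h^{-1}(\mathbf{1}_T)$ given by $v_h=b_T/A_T(b_T)$ (which is in $V_h$, nonnegative, and satisfies $A_h(v_h)=\mathbf{1}_T\geq 0$), the property \eqref{eq:DPProperty} yields $a(u_h,v_h)\geq (f,v_h)$. Hence
\begin{equation*}
\sigma_h|_T\,|T|=(\sigma_h,\mathbf{1}_T)=(f,v_h)-a(u_h,v_h)\leq 0,
\end{equation*}
so $\sigma_h|_T\leq 0$ on every $T$, which is \eqref{eq:sigmahP1}.

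For the \emph{vanishing property} \eqref{eq:sigmhaP2}, fix $T\in\cT_h$ with $A_T(u_h)>A_T(\chi)$. Using the same preimage $v_h=b_T/A_T(b_T)$ of $\mathbf{1}_T$, and noting that $a$ and $(\cdot,\cdot)$ are linear so \eqref{eq:DPProperty2} gives $a(u_h,b_T/A_T(b_T))=(f,b_T/A_T(b_T))$, I get
\begin{equation*}
\sigma_h|_T\,|T|=(\sigma_h,\mathbf{1}_T)=(f,v_h)-a(u_h,v_h)=0,
\end{equation*}
so $\sigma_h|_T=0$. Both \eqref{eq:sigmahP1} and \eqref{eq:sigmhaP2} thus reduce to picking the bubble preimage and invoking the appropriate identity among \eqref{eq:DPProperty}--\eqref{eq:DPProperty2}; the legitimacy of that choice is exactly what the well-definedness step secures.
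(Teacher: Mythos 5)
Your proposal is correct and follows essentially the same route as the paper: well-definedness via \eqref{eq:DPProperty1} applied to the difference of two preimages, the sign property from \eqref{eq:DPProperty} with nonnegative-mean test functions, and the vanishing property from \eqref{eq:DPProperty2}. Your version merely makes the paper's terse second and third steps explicit by testing with $\mathbf{1}_T$ and the bubble preimage $b_T/A_T(b_T)$ (noting that only $A_h(v_h)\geq 0$, not pointwise nonnegativity, is what \eqref{eq:DPProperty} requires), which is exactly how the paper later arrives at the element-wise formula \eqref{eq:sigmahdef1}.
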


\begin{proof}
For $w_h\in V_{pc,h}$, let $v_1$ and $v_2\in V_h$ be such that
$v_1\neq v_2$ and $\Pi_h(v_1)=\Pi_h(v_2)=w_h$, where $\Pi_h$ is
defined as in Lemma \ref{lem:Pih}. Then since $A_h(v_1-v_2)=0$, we
have by \eqref{eq:DPProperty1} that $a(u_h,v_1-v_2)=(f,v_1-v_2)$.
This implies $a(u_h,v_1)-(f,v_1)=a(u_h,v_2)-(f,v_2)$ and hence
$\sigma_h$ is well-defined.

\par
Choosing $w_h\geq 0$ in \eqref{eq:sigmahdef} and using
\eqref{eq:DPProperty} we conclude that $\sigma_h\leq 0$ on
$\bar\Omega$. Similarly \eqref{eq:sigmhaP2} follows from
\eqref{eq:DPProperty2}.
\end{proof}

\par
\noindent In view of the Lemma \ref{lem:sigmaprops} and since we
can chose $\Pi_h^{-1}(w_h)$ element-wise by
$\Pi_h^{-1}(w_h)|_T=w_h b_T/A_h(b_T)$, it is easy to see that we
can write \eqref{eq:sigmahdef} element-wise as

\begin{align}\label{eq:sigmahdef1}
\sigma_h|_T=\left(\int_T b_T\,dx\right)^{-1}\left(\int _T
fb_T\,dx-\int_T \nabla u_h\cdot \nabla b_T\,dx\right).
\end{align}

\par\noindent The above formula is useful in computing the
$\sigma_h$. Further, for any $v_h\in V_h$ we have by
\eqref{eq:sigmahdef} that
\begin{align*}
(\sigma_h,A_h(v_h))=(f,v_h)-a(u_h,v_h)\quad \forall\, v_h\in V_h.
\end{align*}
But since $(\sigma_h,v_h)=(\sigma_h, A_h(v_h))$, we finally have
\begin{align}\label{eq:sigmavh}
(\sigma_h,v_h)=(f,v_h)-a(u_h,v_h)\quad \forall\, v_h\in V_h.
\end{align}

\section{A Priori Error Analysis}\label{sec:Apriori}

The regularity theory of obstacle problem \cite[Theorem
2.5]{KS:2000:VI} implies that if $f\in L^2(\Omega)$, $\chi\in
H^{2}(\Omega)$ and $\Omega$ is convex, then the solution $u \in
H^{2}(\Omega)$. In particular the Lagrange multiplier $\sigma$
defined in \eqref{eq:sigmadef} can be written as $\sigma=f+\Delta
u$ and hence $\sigma\in L^{2}(\Omega)$.

\par
\noindent The following lemma follows from \eqref{eq:MP} and
\eqref{eq:sigmadef}, see \cite{KS:2000:VI, Glowinski:2008:VI}:
\begin{lemma}\label{lem:compatibility}
If $u\in H^2(\Omega)$, then $\sigma\in L^2(\Omega)$ and
\begin{align*}
\sigma &\leq 0 \quad \text{a.e. in}\quad \Omega,\\
(\sigma, u-\chi)&=0.
\end{align*}
Further if $u>\chi$ on some open set $D\subset \Omega$, then
$\sigma \equiv 0$ on $D$.
\end{lemma}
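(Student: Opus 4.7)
The plan is to first upgrade $\sigma$ from an element of $H^{-1}(\Omega)$ to an $L^2$-function, and then to establish the three assertions in order: the sign of $\sigma$, the vanishing of $\sigma$ on open non-coincidence sets, and as a corollary the complementarity $(\sigma, u-\chi) = 0$.

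For the $L^2$-regularity of $\sigma$, I apply a Green formula to the bilinear form in \eqref{eq:sigmadef}: since $u \in H^2(\Omega)$ and $v \in H^1_0(\Omega)$, the term $(\nabla u, \nabla v)$ equals $-(\Delta u, v)$, so \eqref{eq:sigmadef} identifies $\sigma$ with the $L^2$-function $f + \Delta u$. All subsequent duality pairings $\langle\sigma,\cdot\rangle$ can then be written as $L^2$ inner products $(\sigma,\cdot)$.

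For the sign, test \eqref{eq:sigma} with $v = u + \phi$ for an arbitrary nonnegative $\phi \in H^1_0(\Omega)$. Because $u \geq \chi$ a.e.\ in $\Omega$, such $v$ automatically lies in $\cK$, and \eqref{eq:sigma} yields $(\sigma, \phi) \leq 0$ for every nonnegative $\phi$; hence $\sigma \leq 0$ a.e.\ in $\Omega$.

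For the vanishing statement (which is the main substantive step), I use that the Sobolev embedding $H^2(\Omega) \hookrightarrow C(\bar\Omega)$ in three dimensions makes $u-\chi$ continuous on $\bar\Omega$ and strictly positive on $D$. Given $\phi \in C_c^\infty(D)$ with compact support $K \subset D$, the minimum of $u-\chi$ over $K$ is a positive number $\delta$; hence for $\epsilon < \delta/\|\phi\|_{L^\infty}$, both competitors $u \pm \epsilon \phi$ remain above $\chi$ on $\Omega$ and therefore lie in $\cK$. Substituting these into \eqref{eq:sigma} gives $(\sigma, \pm\phi) \leq 0$, so $(\sigma, \phi) = 0$ for every such $\phi$; density of $C_c^\infty(D)$ in $L^2(D)$ then yields $\sigma \equiv 0$ on $D$. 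The complementarity identity is an immediate corollary: splitting $\Omega$ into the open set $D_0 = \{u>\chi\}$, on which $\sigma = 0$, and its complement, on which $u = \chi$ a.e., both contributions to $(\sigma, u-\chi)$ vanish. The only real obstacle is verifying admissibility of the two-sided perturbation in the vanishing step; once continuity of $u-\chi$ on $\bar\Omega$ is invoked this is routine.
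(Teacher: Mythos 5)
Your proof is correct and is the standard argument (identify $\sigma=f+\Delta u\in L^2$ by Green's formula, test \eqref{eq:sigma} with one-sided perturbations $u+\phi$ for the sign, two-sided perturbations $u\pm\epsilon\phi$ supported where $u-\chi$ has a positive minimum for the vanishing, and deduce complementarity by splitting $\Omega$ into $\{u>\chi\}$ and $\{u=\chi\}$). The paper does not prove this lemma itself—it records $\sigma=f+\Delta u$ and defers to \cite{KS:2000:VI, Glowinski:2008:VI}—and your argument matches the one in those references, with the continuity of $u-\chi$ (from $H^2(\Omega)\hookrightarrow C(\bar\Omega)$ and $\chi\in C(\bar\Omega)$) correctly identified as the point that makes the two-sided perturbation admissible.
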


For the rest of this section, we assume that the data $f\in
H^1(\Omega)$, $\chi\in H^3(\Omega)$ and the solution $u \in
H^3(\Omega_N)\cup H^3(\Omega_C)$, where
\begin{align*}
\Omega_N&=\{x \in \Omega: u(x)>\chi(x) \},\\
\Omega_C&=\{x \in \Omega: u(x)=\chi(x) \}^\circ,
\end{align*}
for any set $D\subset \Omega$, the set $D^\circ$ denotes the
interior of $D$. Further assume that $u\in H^s(\Omega)$, where
$s=5/2-\epsilon$ for any $\epsilon >0$. We derive now an a priori
error estimate. This regularity assumption makes sense as the
solution of the obstacle problem looses the regularity at the free
boundary and if the free boundary is smooth the solution satisfies
as elliptic problem in the non contact region. Further, on the
contact region, the obstacle is assumed to be smooth enough.

\begin{theorem}\label{thm:Apriori}
There holds
\begin{align*}
\|\nabla(u-u_h)\|\leq C
h^{3/2-\epsilon}\left(\|u\|_{H^{5/2-\epsilon}(\Omega)}+\|f\|_{H^1(\Omega)}+\|\chi\|_{H^3(\Omega)}+\|u\|_{H^3(\Omega_N)}\right),
\end{align*}
for any $\epsilon >0$.
\end{theorem}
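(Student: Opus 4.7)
The plan is to derive a Falk-type estimate incorporating the Lagrange multiplier $\sigma \in L^2(\Omega)$, then to control each resulting term by exploiting the interpolant $I_h$ and its mean-preserving property $A_T(I_Tu)=A_T(u)$, the sign structure $\sigma \leq 0$ with the complementarity $(\sigma, u-\chi)=0$, and the discrete constraint $A_T(u_h) \geq A_T(\chi)$ built into $\cK_h$. I would first observe that since $A_T(I_Tu) = A_T(u) \geq A_T(\chi)$, the interpolant $I_hu$ lies in $\cK_h$, so the discrete inequality \eqref{eq:DP} yields $a(u_h, I_hu - u_h) \geq (f, I_hu - u_h)$. Combining this with the identity $a(u,w) = (f,w) - (\sigma, w)$ for all $w\in H^1_0(\Omega)$ (valid since $\sigma\in L^2(\Omega)$ under the present regularity) and splitting $\|u-u_h\|_a^2 = a(u-u_h, u-I_hu) + a(u-u_h, I_hu-u_h)$ leads to
\begin{equation*}
\|u-u_h\|_a^2 \leq a(u-u_h, u-I_hu) + (\sigma, u - I_hu) + (\sigma, u_h - u).
\end{equation*}

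The first term is handled by Young's inequality together with Lemma \ref{lem:Approx} at $s = 5/2-\epsilon$, which contributes $\tfrac12\|u-u_h\|_a^2 + Ch^{3-2\epsilon}\|u\|_{H^{5/2-\epsilon}(\Omega)}^2$, the first part being absorbed on the left. For $(\sigma, u-I_hu)$ I would use the crucial equality $A_T(u-I_Tu)=0$ to replace $\sigma$ by $\sigma - A_T\sigma$ inside each element integral, and then split the triangulation into three regions: simplices contained in $\Omega_N$, where Lemma \ref{lem:compatibility} gives $\sigma \equiv 0$; simplices contained in $\Omega_C$, where $u=\chi\in H^3(T)$ and $\sigma=f+\Delta\chi\in H^1(T)$ (using the assumed $H^1$ regularity of $f$ and $H^3$ regularity of $\chi$), producing $O(h_T^4)$ contributions via Bramble-Hilbert; and simplices intersecting the free boundary, where only the global $u\in H^{5/2-\epsilon}$ and $\sigma\in L^2$ are available.

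For $(\sigma, u_h - u)$ I would invoke complementarity $(\sigma, u-\chi)=0$ to rewrite it as $(\sigma, u_h-\chi)$ and then perform the element-wise splitting
\begin{equation*}
(\sigma, u_h-\chi)_T = A_T(\sigma)\,|T|(A_T(u_h)-A_T(\chi)) + (\sigma - A_T\sigma,\, (u_h-\chi) - A_T(u_h-\chi))_T.
\end{equation*}
The first summand is pointwise non-positive, since $A_T(\sigma)\leq 0$ by $\sigma\leq 0$ and $A_T(u_h)\geq A_T(\chi)$ by $u_h\in\cK_h$, so it can be dropped. The remainder is controlled by Poincaré-Wirtinger on $T$ as $Ch_T\|\sigma - A_T\sigma\|_{L^2(T)}\|\nabla(u_h-\chi)\|_{L^2(T)}$, after which I would split $\nabla(u_h-\chi) = \nabla(u-u_h) + \nabla(u-\chi)$: the first piece produces a $\|\nabla(u-u_h)\|^2$ contribution that Young's inequality absorbs, while the second benefits from the fact that $\nabla(u-\chi) = 0$ on $\Omega_C$, again confining the remaining work to simplices near the free boundary.

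The main obstacle is the thin layer of simplices meeting the free boundary. There, $\sigma$ fails to be $H^1$ across the interface and $u$ is only globally $H^{5/2-\epsilon}$, so bounding the residual sums by the target $O(h^{3-2\epsilon})$ requires a careful balance between the bare $L^2$-bound on $\sigma$, the small volume of the strip, and the $H^{5/2-\epsilon}$ interpolation estimates; this is precisely where the assumption $u\in H^3(\Omega_N)$ enters the bound (to supply an $H^1$ trace of $\sigma$ from the non-contact side of the strip) and where the $\epsilon$-loss relative to the classical $h^{3/2}$ rate of \cite{BHR:1977:VI} originates.
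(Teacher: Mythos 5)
Your overall architecture coincides with the paper's: $I_hu\in\cK_h$ because the interpolant preserves element means, the error is reduced to $a(u-u_h,u-I_hu)$ plus a term involving $\sigma$, and that term is dispatched by splitting the mesh into non-contact, contact and free-boundary elements, replacing $\sigma$ by $\sigma-A_T(\sigma)$ against mean-zero factors, and discarding the non-positive product of means $A_T(\sigma)\,|T|\,(A_T(u_h)-A_T(\chi))$. Your regrouping of $(\sigma,u_h-I_hu)$ as $(\sigma,u-I_hu)+(\sigma,u_h-\chi)$ is an equivalent rearrangement of the paper's elementwise splitting, and your treatment of the $\bN$ and $\bC$ elements is sound.

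The gap sits exactly where you defer the work: the free-boundary elements $T\in\bF$. You assert that only $\sigma\in L^2$ is available there, but with the bare $L^2$ bound the contribution $\sum_{T\in\bF}(\sigma-A_T(\sigma),\,u-I_hu)_T$ is at best $O(h^{5/2-\epsilon})$ by Cauchy--Schwarz, which falls short of the required $O(h^{3-2\epsilon})$ for every $\epsilon<1/2$; likewise $h_T\|\sigma-A_T(\sigma)\|_{L^2(T)}$ is then only $O(h_T)$ rather than $O(h_T^{3/2-\epsilon})$, so the Young absorption of the factor $\|\nabla(u-u_h)\|_{L^2(T)}$ fails. The paper closes this by using the fractional regularity $\sigma=f+\Delta u\in H^{1/2-\epsilon}(\Omega)$ (from $f\in H^1$ and $u\in H^{5/2-\epsilon}$), so that $\|\sigma-A_T(\sigma)\|_{L^2(T)}\le C h_T^{1/2-\epsilon}\|\sigma\|_{H^{1/2-\epsilon}(T)}$ restores the missing half power. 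Moreover, your stated reason for invoking $u\in H^3(\Omega_N)$ --- ``to supply an $H^1$ trace of $\sigma$ from the non-contact side'' --- is incorrect, since $\sigma\equiv 0$ on $\Omega_N$ and there is nothing to trace. Its actual role is to bound $\|\nabla(u-\chi)\|_{L^2(T)}$ on $T\in\bF$: because $\nabla(u-\chi)=0$ a.e.\ on the contact set and $H^3(E)\hookrightarrow C^{1,1/2}(\bar E)$ on $E=T\cap\Omega_N$, one gets $|\nabla(u-\chi)(x)|\le C|x-x^*|^{1/2}\|u-\chi\|_{H^3(E)}$ with $x^*\in\partial E$ a point where the gradient vanishes, and hence $\|\nabla(u-\chi)\|_{L^2(T)}\le Ch_T^{2}\|u-\chi\|_{H^3(E)}$. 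Without these two ingredients the ``careful balance'' you appeal to does not materialize, so the proof is incomplete precisely at its decisive step.
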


\begin{proof}
Since $I_hu\in\cK_h$, we find using \eqref{eq:DP} and integration by parts that
\begin{align*}
\|\nabla (u-u_h)\|^2&=a(u-u_h,u-I_hu)+a(u-u_h,I_hu-u_h)\\
&\leq a(u-u_h,u-I_hu)+a(u,I_hu-u_h)-(f,I_hu-u_h)\\
&= a(u-u_h,u-I_hu)+(-\Delta u-f,I_hu-u_h)\\
&= a(u-u_h,u-I_hu)-\sum_{T\in\cT_h}\int_T \sigma(I_hu-u_h)\,dx.
\end{align*}
The interpolation properties of $I_h$ in Lemma \ref{lem:Approx}
imply that
\begin{align}\label{eq:InterEst1}
\|\nabla(u-I_hu)\|\leq C h^{3/2-\epsilon}
|u|_{H^{5/2-\epsilon}(\Omega)},
\end{align}
for any $\epsilon >0$. On the other hand, divide the elements in
$\cT_h$ into the following sets:
\begin{align*}
\bN&=\{T\in \cT_h: u>\chi \text{ on } T\},\\
\bC&=\{T\in \cT_h: u\equiv \chi \text{ on } T\},\\
\bF&=\cT_h\backslash\{\bN\cup\bC\}.
\end{align*}
Then we write
\begin{align}
\sum_{T\in\cT_h}\int_T \sigma(I_hu-u_h)\,dx
&=\sum_{T\in\bN}\int_T
\sigma(I_hu-u_h)\,dx+\sum_{T\in\bC}\int_T
\sigma(I_hu-u_h)\,dx\notag\\
&\quad+\sum_{T\in\bF}\int_T \sigma(I_hu-u_h)\,dx\notag\\
&=\sum_{T\in\bC}\int_T \sigma(I_hu-u_h)\,dx+ +\sum_{T\in\bF}\int_T
\sigma(I_hu-u_h)\,dx,\label{eq:InterEst2}
\end{align}
since on any $T\in\bN$, we have $\sigma\equiv 0$ on $T$. Also
since $A_T(\sigma) \leq 0$ for any $T\in\cT_h$, we have
\begin{align*}
\int_T A_T(\sigma)(I_h\chi-u_h)\,dx \geq 0\quad\forall\,
T\in\cT_h.
\end{align*}
\par
\noindent Now let $T\in\bC$. Then we have $u\equiv \chi$ on $T$
and
\begin{align}
\int_T \sigma(I_hu-u_h)\,dx &=\int_T \sigma(I_h\chi-u_h)\,dx\geq \int_T (\sigma-A_T(\sigma))(I_h\chi-u_h)\,dx \notag\\
&= \int_T (\sigma-A_T(\sigma))(I_hu-u_h)\,dx \notag\\
&=\int_T (\sigma-A_T(\sigma))\big((I_hu-u_h)-A_T(I_hu-u_h)\big)\,dx\notag\\
&\geq -Ch_T^{2} \|\sigma\|_{H^1(T)} \|\nabla(I_hu-u_h)\|_{L^2(T)}.
\label{eq:InterEst3}
\end{align}
Finally let $T\in\bF$. Then using Lemma \ref{lem:compatibility},
we find
\begin{align*}
\int_T \sigma(I_hu-u_h)\,dx &=\int_T \sigma
\big((I_hu-u)+(u-\chi)+(\chi-I_h\chi)+(I_h\chi-u_h)\big)\,dx\\
&=\int_T \sigma \big(I_h(u-\chi)-(u-\chi)+(I_h\chi-u_h)\big)\,dx.
\end{align*}
Using the definition and interpolation properties of $I_h$, we
find
\begin{align}
\left|\int_T \sigma \big(I_h(u-\chi)-(u-\chi)\big)\,dx\right|&
=\left|\int_T
\big(\sigma-A_T(\sigma)\big)\big(I_h(u-\chi)-(u-\chi)\big)\,dx\right|\notag\\
&\leq C h_T^{1/2-\epsilon}\|(u-\chi)-I_h(u-\chi)\|_{L^2(T)}
\|\sigma\|_{H^{1/2-\epsilon}(T)}\notag\\
&\leq C
h_T^{3-2\epsilon}\|u-\chi\|_{H^{5/2-\epsilon}(T)}\,\|\sigma\|_{H^{1/2-\epsilon}(T)},\label{eq:FreeBterm}
\end{align}
for any $\epsilon>0$. As for $T\in \bC$, we note for any
$\epsilon>0$ that
\begin{align*}
\int_T \sigma(I_h\chi-u_h)\,dx &\geq \int_T (\sigma-A_T(\sigma))(I_h\chi-u_h)\,dx\\
&=\int_T (\sigma-A_T(\sigma))\big((I_h\chi-u_h)-A_T(I_h\chi-u_h)\big)\,dx\\
&\geq -Ch_T^{3/2-\epsilon} \|\sigma\|_{H^{1/2-\epsilon}(T)}
\|\nabla(I_h\chi-u_h)\|_{L^2(T)}.
\end{align*}
Using the triangle inequality and interpolation properties of
$I_h$, we find
\begin{align*}
\|\nabla(I_h\chi-u_h)\|_{L^2(T)}&\leq
\|\nabla(I_h\chi-\chi)\|_{L^2(T)}+\|\nabla(\chi-u)\|_{L^2(T)}+\|\nabla(u-u_h)\|_{L^2(T)}\\
&\leq C
h_T^{2}\|\chi\|_{H^3(T)}+\|\nabla(u-\chi)\|_{L^2(T)}+\|\nabla(u-u_h)\|_{L^2(T)}.
\end{align*}
Note that if $u-\chi=0$ on a set $D$ of measure non zero, then  by
the result of Stampachchia, $\nabla(u-\chi)=0$ a.e. on $D$, see
\cite[Appendix 4]{Kesavan}. Therefore
\begin{align*}
\|\nabla(u-\chi)\|_{L^2(T)}&=\left(\int_{\{u>\chi\}}
|\nabla(u-\chi)|^2\,dx\right)^{1/2}=\|\nabla(u-\chi)\|_{L^2(E)},
\end{align*}
where $E=\{x\in T: u(x)-\chi(x)>0\}$. Since $u-\chi\in
C(\bar\Omega)$, the set $E$ is open. From the assumption on the
regularity, we have $u-\chi\in H^3(E)$. Since $H^3(E)\subset
C^{1,\theta}(\bar E)$ with $\theta=1/2$, we have from
\cite[Theorem 2.4.5]{Kesavan} that
\begin{align*}
|\nabla(u-\chi)(x)|\leq C |x-x^*|^{1/2}\|u-\chi\|_{H^3(E)},
\end{align*}
where $x\in E$ and $x^*\in\partial E$ is such that
$\nabla(u-\chi)(x^*)=0$. Therefore
\begin{align*}
\|\nabla(u-\chi)\|_{L^2(T)}&\leq C |T|^{1/2}h_T^{1/2}
\|u-\chi\|_{H^3(E)}\leq C h_T^{2}\|u-\chi\|_{H^3(E)}.
\end{align*}
Therefore for any $T\in\bF$, we find
\begin{align}
\int_T \sigma(I_hu-u_h)\,dx &\geq -C
h_T^{3-2\epsilon}\|u-\chi\|_{H^{5/2-\epsilon}(T)}\,\|\sigma\|_{H^{1/2-\epsilon}(T)}\notag\\
&\quad -Ch_T^{3/2-\epsilon}
\|\sigma\|_{H^{1/2-\epsilon}(T)}\left(h_T^{2}\|\chi\|_{H^3(T)}+
h_T^{2}\|u-\chi\|_{H^3(E)}\right)\notag\\
&\quad-Ch_T^{3/2-\epsilon} \|\sigma\|_{H^{1/2-\epsilon}(T)}
\|\nabla(u-u_h)\|_{L^2(T)}, \label{eq:InterEst4}
\end{align}
where $E=\{x\in T: u(x)-\chi(x)>0\}$. We  complete the proof by
combining the estimates in
\eqref{eq:InterEst1}-\eqref{eq:InterEst4}
\end{proof}

\par
\noindent {\bf A priori error estimates for $\sigma_h$}.
In this section, we show that the discrete function $\sigma_h$
converges to $\sigma$ in the $H^{-1}$ norm at the same order of
convergence as that of the error $u-u_h$ in the $H^1$ norm. This is essential as the local efficiency estimates are derived using the combined norm of the error $u-u_h$ and
the dual norm of $\sigma-\sigma_h$.

\par
\noindent Let $(\cdot,\cdot)_T$ denote the $L^2(T)$-inner product.
Then from \eqref{eq:sigmadef} and \eqref{eq:sigmahdef}, we note
that
\begin{align}\label{eq:sigmaerror}
(\sigma-\sigma_h,b_T)_T=(\nabla (u_h-u),\, \nabla b_T)_T.
\end{align}

We prove the estimate in $H^{-1}$ norm. Let $D\subset\Omega$ be an
open set and for $v\in H^{-1}(D)$ define its $H^{-1}(D)$ norm
by
\begin{align*}
\|v\|_{H^{-1}(D)} =\sup_{\phi\in H^1_0(D),\;\phi\neq
0}\frac{\langle v ,\phi\rangle }{\|\nabla\phi\|_{L^2(D)}}.
\end{align*}

\par
\noindent
\begin{theorem}\label{thm:sigma-H-apriori}
Let $\sigma$ and $\sigma_h$ be defined by $\eqref{eq:sigmadef}$
and $\eqref{eq:sigmahdef}$. Then, there holds
\begin{align*}
\|\sigma-\sigma_h\|_{H^{-1}(T)}\leq C \left(
h_T\|\sigma-A_T(\sigma)\|_{L^2(T)}+\|\nabla(u-u_h)\|_{L^2(T)}\right).
\end{align*}
\end{theorem}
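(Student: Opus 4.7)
The plan is to use the dual characterization of the $H^{-1}(T)$ norm: pick an arbitrary $\phi\in H^1_0(T)$ (extended by zero outside $T$) with $\|\nabla\phi\|_{L^2(T)}=1$, and bound $\langle\sigma-\sigma_h,\phi\rangle$. Since $\sigma\in L^2(\Omega)$ under our regularity assumption, $\langle\sigma,\phi\rangle=(\sigma,\phi)_T=(f,\phi)_T-(\nabla u,\nabla\phi)_T$, while $(\sigma_h,v_h)=(f,v_h)-a(u_h,v_h)$ for every $v_h\in V_h$ by \eqref{eq:sigmavh}. The key trick is to subtract off a judiciously chosen discrete test function supported in $T$ to expose both error terms appearing in the target bound.

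Concretely, I would take $v_h\in V_h$ of the form $v_h|_T=\alpha\,b_T$ with $\alpha=A_T(\phi)/A_T(b_T)$ (extended by zero on the other simplices), so that $v_h\in V_h$ and $A_T(v_h)=A_T(\phi)$. Writing $\phi=(\phi-v_h)+v_h$ and using the two identities above,
\begin{align*}
\langle\sigma-\sigma_h,\phi\rangle=(\sigma-\sigma_h,\phi-v_h)_T+\bigl[(f,v_h)-a(u,v_h)\bigr]-(\sigma_h,v_h)_T.
\end{align*}
The bracketed term simplifies using \eqref{eq:sigmavh} to $a(u_h-u,v_h)_T$. For the first term, since $\sigma_h|_T$ is constant and $A_T(\phi-v_h)=0$, one has $(\sigma_h,\phi-v_h)_T=0$, and subtracting $A_T(\sigma)$ (which also integrates against $\phi-v_h$ to zero) gives $(\sigma-\sigma_h,\phi-v_h)_T=(\sigma-A_T(\sigma),\phi-v_h)_T$. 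Thus
\begin{align*}
\langle\sigma-\sigma_h,\phi\rangle=(\sigma-A_T(\sigma),\phi-v_h)_T+a(u_h-u,v_h)_T.
\end{align*}

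It remains to control $\|\phi-v_h\|_{L^2(T)}$ and $\|\nabla v_h\|_{L^2(T)}$. Since $\phi\in H^1_0(T)$, the Poincar\'e inequality gives $\|\phi\|_{L^2(T)}\leq Ch_T\|\nabla\phi\|_{L^2(T)}$, and $|A_T(\phi)|\leq |T|^{-1/2}\|\phi\|_{L^2(T)}$. A scaling argument on the reference simplex shows that $A_T(b_T)$ is a fixed positive constant, $\|b_T\|_{L^2(T)}\leq C|T|^{1/2}$, and $\|\nabla b_T\|_{L^2(T)}\leq Ch_T^{-1}|T|^{1/2}$. Combining these yields $\|v_h\|_{L^2(T)}\leq C\|\phi\|_{L^2(T)}\leq Ch_T\|\nabla\phi\|_{L^2(T)}$, whence $\|\phi-v_h\|_{L^2(T)}\leq Ch_T\|\nabla\phi\|_{L^2(T)}$, and similarly $\|\nabla v_h\|_{L^2(T)}\leq C\|\nabla\phi\|_{L^2(T)}$. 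Applying Cauchy--Schwarz to each of the two terms in the identity, dividing by $\|\nabla\phi\|_{L^2(T)}=1$, and taking the supremum over admissible $\phi$ yields the claimed estimate.

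The main obstacle is keeping the scaling of the bubble correct: the bound requires that multiplication by $b_T/A_T(b_T)$ be a uniformly $L^2$-bounded correction but an $H^1$-bounded operation with only an $h_T^{-1}$-type inverse factor, which is exactly what the scaling gives but must be verified carefully. Beyond that, identifying $v_h=\alpha b_T$ as the right element-local test function (so that both $(\sigma_h,\phi-v_h)_T$ and the lowest-order part of $(\sigma,\phi-v_h)_T$ vanish, leaving only the oscillation $\sigma-A_T(\sigma)$) is the decisive idea; everything else is routine Cauchy--Schwarz and Poincar\'e.
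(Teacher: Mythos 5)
Your proof is correct and is essentially the paper's argument in a slightly reorganized form: your corrector $v_h=\bigl(A_T(\phi)/A_T(b_T)\bigr)b_T$ is exactly the function $\phi_T(1,b_T)_T^{-1}b_T$ the paper introduces, and both arguments rest on the same bubble scalings $\|b_T\|_{L^2(T)}\leq Ch_T^{3/2}$, $\|\nabla b_T\|_{L^2(T)}\leq Ch_T^{1/2}$ together with the identity $(\sigma-\sigma_h,b_T)_T=(\nabla(u_h-u),\nabla b_T)_T$. The only cosmetic difference is that the paper first splits off $\sigma-A_T(\sigma)$ by the triangle inequality and treats the constant remainder via $\phi_T$, whereas you subtract the mean-matching bubble from $\phi$ directly; the resulting two terms are identical.
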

\begin{proof}
Using the triangle inequality, we write
\begin{align*}
\|\sigma-\sigma_h\|_{H^{-1}(T)}
&=\|\sigma-A_T(\sigma)\|_{H^{-1}(T)}+\|A_T(\sigma)-\sigma_h\|_{H^{-1}(T)}\\
&\leq
C\left(h_T\|\sigma-A_T(\sigma)\|_{L^2(T)}+\|A_T(\sigma)-\sigma_h\|_{H^{-1}(T)}\right).
\end{align*}
\par
\noindent  Let $\phi\in H^1_0(T)$ and $\phi_T=(1,\phi)_T$. Then
\begin{align*}
(A_T(\sigma)-\sigma_h,\phi)_T=(A_T(\sigma)-\sigma_h,\phi)_T=\phi_T
(1,b_T)_T^{-1} (A_T(\sigma)-\sigma_h,b_T)_T.
\end{align*}
Note that by scaling $|\phi_T| \leq C h_T^{3/2}\|
\phi\|_{L^2(T)}\leq C h_T^{5/2}\|\nabla \phi\|_{L^2(T)}$ and
$(1,b_T)_T^{-1}\leq C h_T^{-3}$. Therefore
$$|\phi_T(1,b_T)_T^{-1}|\leq C h_T^{-1/2}\|\nabla \phi\|_{L^2(T)}.$$
\noindent Further $\|b_T\|_{L^2(T)}\leq C h_T^{3/2}$ and $\|\nabla
b_T\|_{L^2(T)}\leq C h_T^{1/2}$. Using this we find
\begin{align*}
(A_T(\sigma)-\sigma_h,b_T)_T &=(A_T(\sigma)-\sigma,b_T)_T+(\sigma-\sigma_h,b_T)_T\\
&=(A_T(\sigma)-\sigma,b_T)_T+a(u_h-u,b_T)_T\\
&\leq C h_T^{3/2}\|A_T(\sigma)-\sigma\|_{L^2(T)} +C
h_T^{1/2}\|\nabla (u-u_h)\|_{L^2(T)},
\end{align*}
and
\begin{align*}
|\phi_T (1,b_T)_T^{-1} (A_T(\sigma)-\sigma_h,b_T)_T|\leq C \left(
h_T\|A_T(\sigma)-\sigma\|_{L^2(T)} +\|\nabla
(u-u_h)\|_{L^2(T)}\right)\|\nabla \phi\|_{L^2(T)},
\end{align*}
which proves
\begin{align*} \|A_T(\sigma)-\sigma_h\|_{H^{-1}(T)}\leq C \left(
h_T\|A_T(\sigma)-\sigma\|_{L^2(T)} +\|\nabla
(u-u_h)\|_{L^2(T)}\right).
\end{align*}
This completes the proof.
\end{proof}

\par
\noindent
\begin{theorem}\label{thm:sigma-L2-apriori}
Let $\sigma$ and $\sigma_h$ be defined by $\eqref{eq:sigmadef}$
and $\eqref{eq:sigmahdef}$. Then, there holds
\begin{align*}
\|\sigma-\sigma_h\|_{L^2(T)}\leq C \left(
\|A_T(\sigma)-\sigma\|_{L^2(T)}+h_T^{-1}\|\nabla(u-u_h)\|_{L^2(T)}\right).
\end{align*}
\end{theorem}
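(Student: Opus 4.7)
The plan is to start exactly as in the proof of Theorem \ref{thm:sigma-H-apriori}: insert $A_T(\sigma)$ as a pivot and use the triangle inequality,
\[
\|\sigma-\sigma_h\|_{L^2(T)} \le \|\sigma - A_T(\sigma)\|_{L^2(T)} + \|A_T(\sigma) - \sigma_h\|_{L^2(T)}.
\]
The first term already matches the right-hand side of the claim, so everything reduces to estimating $\|A_T(\sigma)-\sigma_h\|_{L^2(T)}$. The crucial observation is that both $A_T(\sigma)$ and $\sigma_h|_T$ are constants on $T$, so their difference $c := A_T(\sigma) - \sigma_h|_T$ is a single real number, and I only need a bound on $|c|$.

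To extract $|c|$, I would test against the bubble $b_T$ exactly as in the proof of Theorem \ref{thm:sigma-H-apriori}. Using the identity \eqref{eq:sigmaerror}, I write
\[
(c,b_T)_T = (A_T(\sigma)-\sigma,b_T)_T + (\sigma - \sigma_h, b_T)_T = (A_T(\sigma)-\sigma,b_T)_T + (\nabla(u_h-u),\nabla b_T)_T,
\]
and apply Cauchy--Schwarz together with the scalings $\|b_T\|_{L^2(T)} \le C h_T^{3/2}$ and $\|\nabla b_T\|_{L^2(T)}\le C h_T^{1/2}$. Since $c$ is constant, $(c,b_T)_T = c\int_T b_T\,dx$, and by direct computation on the reference simplex $\int_T b_T\,dx \ge C h_T^3$. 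Combining these yields
\[
|c|\, h_T^3 \le C\bigl(h_T^{3/2}\|A_T(\sigma)-\sigma\|_{L^2(T)} + h_T^{1/2}\|\nabla(u-u_h)\|_{L^2(T)}\bigr).
\]
Since $\|A_T(\sigma)-\sigma_h\|_{L^2(T)} = |c|\,|T|^{1/2} \le C|c|\,h_T^{3/2}$, this gives
\[
\|A_T(\sigma)-\sigma_h\|_{L^2(T)} \le C\bigl(\|A_T(\sigma)-\sigma\|_{L^2(T)} + h_T^{-1}\|\nabla(u-u_h)\|_{L^2(T)}\bigr),
\]
and combining with the triangle inequality completes the proof.

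There is really no conceptual obstacle here; the argument is a direct variant of the one just used for the $H^{-1}(T)$ estimate, with the only difference being that the scaling factor one gains on $|c|$ from the bubble test function is now combined with the $L^2$-scaling $h_T^{3/2}$ rather than with a duality pairing against $\phi\in H^1_0(T)$. The mildly delicate point is ensuring that the lower bound $\int_T b_T\,dx \gtrsim h_T^3$ holds with a constant depending only on shape regularity (which follows from the fact that $b_T$ is obtained from a fixed reference bubble by the affine map onto $T$), so that the final constant $C$ depends only on the mesh regularity.
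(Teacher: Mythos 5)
Your argument is correct and is essentially the paper's own proof: the same triangle inequality through $A_T(\sigma)$, the same test against the bubble $b_T$ combined with the identity \eqref{eq:sigmaerror}, and the same scalings $\int_T b_T\,dx \approx h_T^3$, $\|b_T\|_{L^2(T)}\le Ch_T^{3/2}$, $\|\nabla b_T\|_{L^2(T)}\le Ch_T^{1/2}$. The paper phrases the extraction of the constant by pairing with $w_T b_T$ where $w_T=A_T(\sigma)-\sigma_h|_T$, but since $w_T$ is a constant this is the same computation as your direct bound on $|c|$.
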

\begin{proof}
Using the triangle inequality, we find
\begin{align*}
\|\sigma-\sigma_h\|_{L^2(T)}
&=\|\sigma-A_T(\sigma)\|_{L^2(T)}+\|A_T(\sigma)-\sigma_h\|_{L^2(T)}.
\end{align*}
Since $w_T=A_T(\sigma)-\sigma_h|_T\in \bbP_0(T)$, $|(1,b_T)_T|
\approx C h_T^{3}$ and by the scaling arguments, we find for some
positive constant $C$ that
\begin{align*}
C\|A_T(\sigma)-\sigma_h\|_{L^2(T)}^2 &\leq (A_T(\sigma)-\sigma_h,
w_Tb_T)=(A_T(\sigma)-\sigma, w_Tb_T)+(\sigma-\sigma_h,
w_Tb_T)\\
&\leq \|\sigma-A_T(\sigma)\|_{L^2(T)}\|w_T\|_{L^2(T)}+|w_T|\,
a(u_h-u,b_T)\\
&\leq \|\sigma-A_T(\sigma)\|_{L^2(T)}\|w_T\|_{L^2(T)}+C h_T^{-1}
\|\nabla(u-u_h)\|_{L^2(T)}\|w_T\|_{L^2(T)}.
\end{align*}
This completes the proof.
\end{proof}

\section{A Posteriori Error Estimates}\label{sec:Aposteriori}
In this section, we derive residual based a posteriori error
estimates. Note that we assume $f\in L^2(\Omega)$ and $\chi\in H^1(\Omega)\cap C(\bar\Omega)$ and $\chi|_{\partial\Omega}\leq 0$ as in the introduction.  We begin by defining the following sets:
\begin{align*}
\bC_h=\{ T\in\cT_h: A_T(u_h)=A_T(\chi)\},
\end{align*}
and
\begin{align*}
\bN_h=\{ T\in\cT_h: A_T(u_h)>A_T(\chi) \}.
\end{align*}

\par
\noindent The residual based error estimates can be derived
conveniently by using the corresponding residual. Define the
residual $\cR_h: H^1_0(\Omega)\rightarrow \R$ by
\begin{align}\label{eq:Gh}
\cR_h(v)=a(u-u_h,v)+\langle \sigma-\sigma_h, v\rangle \quad \forall
v\in H^1_0(\Omega).
\end{align}

\par
\noindent The following lemma connects the norm of the residual
and the norms of the errors:

\begin{lemma}\label{lem:ErrorRelation}
There holds
\begin{align*}
\|\nabla(u-u_h)\|^2+\|\sigma-\sigma_h\|_{H^{-1}(\Omega)}^2&\leq
5\|\cR_h\|_{H^{-1}(\Omega)}^2-6\langle \sigma-\sigma_h,
u-u_h\rangle.
\end{align*}
\end{lemma}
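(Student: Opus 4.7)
The plan is to unpack the definition of the residual and work with the two norms in two short steps, using Young's inequality carefully so that the algebra yields precisely the stated constants $5$ and $6$.

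First I would test $\cR_h$ against $v = u - u_h \in H^1_0(\Omega)$. From \eqref{eq:Gh} this gives
\[
\cR_h(u-u_h) = \|\nabla(u-u_h)\|^2 + \langle \sigma-\sigma_h, u-u_h\rangle,
\]
hence
\[
\|\nabla(u-u_h)\|^2 = \cR_h(u-u_h) - \langle \sigma-\sigma_h, u-u_h\rangle.
\]
Bounding the first term on the right by the dual norm of $\cR_h$ and applying Young's inequality with parameter $1$ absorbs half of $\|\nabla(u-u_h)\|^2$ into the left side and leaves
\[
\|\nabla(u-u_h)\|^2 \leq \|\cR_h\|_{H^{-1}(\Omega)}^2 - 2\langle \sigma-\sigma_h, u-u_h\rangle.
\]

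Next I would control $\|\sigma-\sigma_h\|_{H^{-1}(\Omega)}$. Rearranging \eqref{eq:Gh} yields, for any $\phi\in H^1_0(\Omega)$,
\[
\langle \sigma-\sigma_h,\phi\rangle = \cR_h(\phi) - a(u-u_h,\phi),
\]
so Cauchy--Schwarz and dividing by $\|\nabla\phi\|$ give $\|\sigma-\sigma_h\|_{H^{-1}(\Omega)} \leq \|\cR_h\|_{H^{-1}(\Omega)} + \|\nabla(u-u_h)\|$. Squaring with the elementary $(a+b)^2\leq 2a^2+2b^2$ yields
\[
\|\sigma-\sigma_h\|_{H^{-1}(\Omega)}^2 \leq 2\|\cR_h\|_{H^{-1}(\Omega)}^2 + 2\|\nabla(u-u_h)\|^2.
\]

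Adding this to $\|\nabla(u-u_h)\|^2$ produces $3\|\nabla(u-u_h)\|^2 + 2\|\cR_h\|_{H^{-1}(\Omega)}^2$ on the right, and substituting the bound for $\|\nabla(u-u_h)\|^2$ from the first step gives $3\|\cR_h\|_{H^{-1}(\Omega)}^2 - 6\langle\sigma-\sigma_h,u-u_h\rangle + 2\|\cR_h\|_{H^{-1}(\Omega)}^2$, which matches the claimed inequality exactly. There is no real obstacle here; the only thing to watch is the choice of Young's parameter in the first step, since using anything other than $\epsilon=1$ would either fail to absorb $\|\nabla(u-u_h)\|^2$ cleanly or produce constants different from $5$ and $6$.
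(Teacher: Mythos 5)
Your argument is correct and is essentially the paper's own proof: the same test with $v=u-u_h$ plus Young's inequality to get $\|\nabla(u-u_h)\|^2\leq\|\cR_h\|_{H^{-1}(\Omega)}^2-2\langle\sigma-\sigma_h,u-u_h\rangle$, the same triangle-inequality bound $\|\sigma-\sigma_h\|_{H^{-1}(\Omega)}\leq\|\cR_h\|_{H^{-1}(\Omega)}+\|\nabla(u-u_h)\|$, and the same final combination (the paper merely substitutes before adding rather than after, which is algebraically identical). Nothing is missing.
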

\begin{proof}
Using \eqref{eq:Gh} and Young's inequality, we find
\begin{align*}
\|\nabla(u-  u_h)\|^2 &= a(u-  u_h, u-
u_h)=\cR_h(u-  u_h)-\langle \sigma-\sigma_h, u-  u_h\rangle\\
& \leq \|\cR_h\|_{H^{-1}(\Omega)}\;\|\nabla(u- u_h)\|-\langle
\sigma-\sigma_h,u-  u_h\rangle\\
&\leq
\frac{1}{2}\|\cR_h\|_{H^{-1}(\Omega)}^2+\frac{1}{2}\|\nabla(u-
u_h)\|^2-\langle \sigma-\sigma_h,u-  u_h\rangle,
\end{align*}
and
\begin{align}\label{eq:resi1-error1}
\|\nabla(u- u_h)\|^2 &\leq \|\cR_h\|_{H^{-1}(\Omega)}^2-2\langle
\sigma-\sigma_h,u- u_h\rangle.
\end{align}
Using again \eqref{eq:Gh}, we note that
\begin{align*}
\|\sigma-\sigma_h\|_{H^{-1}(\Omega)}\leq
\|\cR_h\|_{H^{-1}(\Omega)}+ \|\nabla(u-  u_h)\|.
\end{align*}
Now Young's inequality and \eqref{eq:resi1-error1} imply
\begin{align}
\|\sigma-\sigma_h\|_{H^{-1}(\Omega)}^2 &\leq
2\|\cR_h\|_{H^{-1}(\Omega)}^2 + 2
\|\nabla(u- u_h)\|^2 \notag\\
&\leq 4\|\cR_h\|_{H^{-1}(\Omega)}^2-4\langle \sigma-\sigma_h,u-
u_h\rangle. \label{eq:resi1-error2}
\end{align}
 The proof then follows by combining the estimates in \eqref{eq:resi1-error1}-\eqref{eq:resi1-error2}.
\end{proof}

\par
\smallskip
\noindent Define the following estimators:
\begin{align*}
\eta_1&=\left(\sum_{T\in \cT_h} h_T^2 \|\Delta
u_h+f-\sigma_h\|_{L^2(T)}^2\right)^{1/2},
\end{align*}
and
\begin{align*}
\eta_2&=\left(\sum_{e\in\cE_h^i}h_e\|\sjump{\nabla
u_h}\|_{L^2(e)}^2\right)^{1/2}.
\end{align*}

\par
\noindent
The norm of the residual $\cR_h$ is
estimated by using the error estimators in the following lemma:

\begin{lemma}\label{lem:GhBound}
It holds that
\begin{align*}
\|\cR_h\|_{H^{-1}(\Omega)}\leq C
\left(\eta_1^2+\eta_2^2\right)^{1/2}.
\end{align*}
\end{lemma}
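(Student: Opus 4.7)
The plan is to combine a Galerkin orthogonality property for the residual $\cR_h$ with element-wise integration by parts and a standard quasi-interpolation estimate. The starting observation is that $\cR_h$ vanishes on the discrete space $V_h$: for any $v_h\in V_h$,
\[
\cR_h(v_h)=a(u-u_h,v_h)+\langle\sigma-\sigma_h,v_h\rangle
=(f,v_h)-a(u_h,v_h)-(\sigma_h,v_h)=0,
\]
where the first equality uses $\langle\sigma,v_h\rangle=(f,v_h)-a(u,v_h)$ from \eqref{eq:sigmadef}, and the second equality is exactly \eqref{eq:sigmavh}. Hence for any $v\in H^1_0(\Omega)$ and any $v_h\in V_h$ we have $\cR_h(v)=\cR_h(v-v_h)$.

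Next, I would fix $v\in H^1_0(\Omega)$ with $\|\nabla v\|=1$ and choose $v_h=\Pi_h v\in V_h$ to be a Scott--Zhang (or Clément-type) quasi-interpolant that respects the homogeneous Dirichlet boundary condition and satisfies the standard local approximation bounds
\[
\|v-v_h\|_{L^2(T)}\le C\,h_T\|\nabla v\|_{L^2(\omega_T)},\qquad
\|v-v_h\|_{L^2(e)}\le C\,h_e^{1/2}\|\nabla v\|_{L^2(\omega_e)},
\]
where $\omega_T$ and $\omega_e$ are the usual element and face patches. Since $\sigma_h\in V_{pc,h}\subset L^2(\Omega)$, I rewrite
\[
\cR_h(v)=\cR_h(v-v_h)=(f,v-v_h)-a(u_h,v-v_h)-(\sigma_h,v-v_h),
\]
and then integrate $a(u_h,v-v_h)$ by parts element-by-element. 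Using $u_h|_T\in \bbP_2(T)\oplus\mathrm{span}\{b_T\}$ so that $\Delta u_h|_T$ is well defined, and noting that $v-v_h$ vanishes on $\partial\Omega$, the interior face contributions assemble into the jumps $\sjump{\nabla u_h}$:
\[
\cR_h(v)=\sum_{T\in\cT_h}(f+\Delta u_h-\sigma_h,\,v-v_h)_T
\;-\;\sum_{e\in\cE_h^i}(\sjump{\nabla u_h},\,v-v_h)_e.
\]

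The finish is routine: apply Cauchy--Schwarz on each element and each face, insert the two approximation bounds above, and use the finite-overlap property of the patches $\omega_T$ and $\omega_e$ together with a discrete Cauchy--Schwarz over $\cT_h$ and $\cE_h^i$. This yields
\[
|\cR_h(v)|\le C(\eta_1+\eta_2)\|\nabla v\|\le C\,(\eta_1^2+\eta_2^2)^{1/2}\|\nabla v\|,
\]
and taking the supremum over $v$ delivers the claim. The only delicate point worth being careful about is that the quasi-interpolant must be chosen to preserve the homogeneous boundary condition so that no boundary face terms appear after integration by parts; the rest is the standard residual machinery applied to a conforming method, with $\sigma_h$ playing the role of the discrete multiplier already built into the volume residual $\Delta u_h+f-\sigma_h$.
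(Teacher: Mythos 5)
Your proposal is correct and follows essentially the same route as the paper: Galerkin orthogonality of $\cR_h$ on $V_h$ via \eqref{eq:sigmadef} and \eqref{eq:sigmavh}, a Scott--Zhang quasi-interpolant, element-wise integration by parts producing the volume residual $f+\Delta u_h-\sigma_h$ and the face jumps $\sjump{\nabla u_h}$, and the standard Cauchy--Schwarz/finite-overlap conclusion. The only cosmetic difference is that you state the face approximation bound explicitly, whereas the paper leaves it implicit in the interpolation estimates; this is not a gap.
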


\begin{proof}
Let $v\in H^1_0(\Omega)$ and choose $v_h\in V_h$ be such that
there holds the following approximation properties:
\begin{align*}
h_T^{-1}\|v-v_h\|_{L^2(T)}+\|\nabla v_h\|_{L^2(T)}\leq C \|\nabla
v\|_{L^2(\cT_T)},
\end{align*}
where $\cT_T$ is the union of triangles contained in patches of all three vertices of $T$. For example, $v_h$ can taken
to be a Scott-Zhang interpolation \cite{Scott:1990:Approx}. Then
\begin{align}
\langle \cR_h,v\rangle&=\langle \cR_h,v-v_h\rangle+\langle
\cR_h,v_h\rangle.
\end{align}
Firstly using \eqref{eq:Gh}, \eqref{eq:sigmadef} and
\eqref{eq:sigmavh}, we find
\begin{align*}
\langle \cR_h, v_h\rangle&=a(u-u_h, v_h)+\langle \sigma-\sigma_h, v_h \rangle\\
&=(f, v_h)-a(u_h, v_h)-(\sigma_h, v_h)=0.
\end{align*}
Secondly using \eqref{eq:Gh} and integration by parts,  we find
\begin{align*}
\langle \cR_h,v-v_h\rangle&=a(u-u_h,v-v_h)+\langle \sigma-\sigma_h,v-v_h \rangle\\
&=(f,v-v_h)-a(u_h,v-v_h)-(\sigma_h,v-v_h)\\
&=\sum_{T\in\cT_h} \int_{T}(f+\Delta u_h-\sigma_h)(v-v_h)\,dx-\sum_{T\in\cT_h} \int_{\partial T}\frac{\partial u_h|_T}{\partial n_T}(v-v_h)\,ds\\
&=\sum_{T\in\cT_h} \int_{T}(f+\Delta u_h-\sigma_h)(v-v_h)\,dx-\sum_{e\in\cE_h^i}\int_e\sjump{\nabla u_h}(v-v_h)\,ds\\
&\leq C \left(\eta_1^2+\eta_2^2\right)^{1/2}\|\nabla v\|.
\end{align*}
This completes the proof.
\end{proof}
It remains to find a lower bound for $\langle \sigma-\sigma_h,
u-u_h\rangle$. To this end,  let  $v^+=\max\{v,0\}$ and
$v^-=\max\{-v,0\}$ for any $v\in H^1(\Omega)$. Then $v=v^+ - v^-$.

\begin{lemma}\label{lem:sigmaLower}
There holds
\begin{align*}
\langle \sigma-\sigma_h, u-u_h\rangle &\geq -\frac{1}{12}
\|\sigma-\sigma_h\|_{H^{-1}(\Omega)}^2-C\left(\|\nabla(\chi-u_h)^+\|^2\right)+\sum_{T\in\bC_h}\int_T
\sigma_h(\chi-u_h)^-\,dx.
\end{align*}
\end{lemma}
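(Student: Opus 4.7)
The plan is to split $\langle \sigma-\sigma_h,u-u_h\rangle$ into the $\sigma$-part and the $\sigma_h$-part and handle each using the constraint each multiplier satisfies, producing a ``$(\chi-u_h)^+$''-term that can be absorbed by Young's inequality and a ``$(\chi-u_h)^-$''-term whose support is automatically confined to $\bC_h$.

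For the $\sigma$-part, the issue is that $u_h\notin\cK$ pointwise in general, so $u_h$ itself is not an admissible test in \eqref{eq:sigma}. The natural competitor is its ``lift'' $v:=u_h+(\chi-u_h)^+=\max\{u_h,\chi\}$, which lies in $\cK$ provided $(\chi-u_h)^+\in H^1_0(\Omega)$. This membership follows from the standing hypothesis $\chi|_{\partial\Omega}\le 0$ together with $u_h\in H^1_0(\Omega)$, which give $\chi-u_h\le 0$ on $\partial\Omega$. Substituting $v$ into \eqref{eq:sigma} yields
\begin{align*}
\langle \sigma, u-u_h\rangle \ge \langle \sigma, (\chi-u_h)^+\rangle.
\end{align*}

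For the $\sigma_h$-part, the idea is to expose the three sign-definite pieces of $u-u_h$ by writing $u-u_h=(u-\chi)+(\chi-u_h)^+-(\chi-u_h)^-$. Combining with the inequality just obtained gives, after the cancellation of $(\sigma_h,(\chi-u_h)^+)$,
\begin{align*}
\langle \sigma-\sigma_h, u-u_h\rangle \ge \langle \sigma-\sigma_h,(\chi-u_h)^+\rangle - (\sigma_h,u-\chi) + (\sigma_h,(\chi-u_h)^-).
\end{align*}
Lemma \ref{lem:sigmaprops} provides the two facts needed to finish: $\sigma_h\le 0$ on $\bar\Omega$ (so together with $u\ge\chi$ the term $-(\sigma_h,u-\chi)$ is nonnegative and can be dropped), and $\sigma_h\equiv 0$ on every $T\in\bN_h$ (so the last inner product collapses to the $\bC_h$-sum appearing in the claim).

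The final step is to apply the $H^{-1}$-$H^1_0$ duality bound and then Young's inequality $ab\ge -\tfrac{1}{12}a^2-3b^2$ to
\begin{align*}
\langle \sigma-\sigma_h,(\chi-u_h)^+\rangle \ge -\|\sigma-\sigma_h\|_{H^{-1}(\Omega)}\,\|\nabla(\chi-u_h)^+\|,
\end{align*}
which produces the two terms $-\tfrac{1}{12}\|\sigma-\sigma_h\|_{H^{-1}(\Omega)}^2$ and $-C\|\nabla(\chi-u_h)^+\|^2$ in the target bound. The only non-mechanical point in the whole argument is verifying that $(\chi-u_h)^+$ belongs to $H^1_0(\Omega)$ so that it is both an admissible competitor in $\cK$ and an admissible test for the $H^{-1}(\Omega)$ norm; once that boundary check is made, everything else is sign bookkeeping and a single application of Young's inequality.
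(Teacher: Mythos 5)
Your argument is correct and is essentially the paper's own proof: both use the lift $u_h^*=\max\{u_h,\chi\}=u_h+(\chi-u_h)^+\in\cK$ as the test function in \eqref{eq:sigma}, apply Young's inequality to $\langle\sigma-\sigma_h,(\chi-u_h)^+\rangle$, drop $-(\sigma_h,u-\chi)\ge 0$ using $\sigma_h\le 0$ and $u\ge\chi$, and localize $(\sigma_h,(\chi-u_h)^-)$ to $\bC_h$ via Lemma \ref{lem:sigmaprops}. Your explicit check that $(\chi-u_h)^+\in H^1_0(\Omega)$ (from $\chi|_{\partial\Omega}\le 0$) is a point the paper leaves implicit, but the route is the same.
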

\begin{proof}
Let $u_h^*=\max\{u_h,\chi\}$. Then $u_h^*\in\cK$ and
$u_h^*-u_h=(\chi-u_h)^+$. Using \eqref{eq:sigma} and $ab\leq
3a^2+b^2/12$, we find
\begin{align*}
\langle \sigma, u-u_h\rangle &= \langle \sigma,
u-u_h^*\rangle+\langle \sigma, u_h^*-u_h\rangle \geq \langle
\sigma, u_h^*-u_h\rangle\\
& = \langle \sigma-\sigma_h, u_h^*-u_h\rangle + \langle \sigma_h,
u_h^*-u_h\rangle\\
&\geq -\frac{1}{12}
\|\sigma-\sigma_h\|_{H^{-1}(\Omega)}^2-3\|\nabla(u_h^*-u_h)\|^2 +
\langle \sigma_h, u_h^*-u_h\rangle.
\end{align*}
\par
\noindent Therefore
\begin{align*}
\langle \sigma-\sigma_h, u-u_h\rangle &\geq -\frac{1}{12}
\|\sigma-\sigma_h\|_{H^{-1}(\Omega)}^2-3\|\nabla(\chi-u_h)^+\|^2 +
\langle \sigma_h, (u_h^*-u_h)-(u-u_h)\rangle.
\end{align*}
Using the fact that $\chi-u\leq 0$ a.e. in $\Omega$ and $
\sigma_h\leq 0$ on $\bar\Omega$, we find
\begin{align*}
\langle  \sigma_h, (u_h^*-u_h)-(u-u_h)\rangle \geq \langle
 \sigma_h, (u_h^*-u_h)-(\chi-u_h)\rangle.
\end{align*}
Note that as $(u_h^*-u_h)-(\chi-u_h)=(\chi-u_h)^-$, we have
\begin{align*}
\langle  \sigma_h, (u_h^*-u_h)-(u-u_h)\rangle \geq \langle
\ \sigma_h, (\chi-u_h)^-\rangle=\int_\Omega
\sigma_h(\chi-u_h)^-\,dx.
\end{align*}
Now using Lemma \ref{lem:sigmaprops},
\begin{align*}
\int_\Omega  \sigma_h(\chi-u_h)^-\,dx=
\sum_{T\in\bC_h}\int_T \sigma_h(\chi-u_h)^-\,dx.
\end{align*}
This completes the proof.
\end{proof}
From Lemma \ref{lem:ErrorRelation}, Lemma \ref{lem:GhBound} and
Lemma \ref{lem:sigmaLower}, we deduce the following result on {\em
a posteriori} error control of quadratic FEM:

\begin{theorem}\label{thm:Apost-Est} It holds that
\begin{align*}
\|\nabla(u-u_h)\|^2+\|\sigma-\sigma_h\|_{H^{-1}(\Omega)}^2&\leq
C\Big(\eta_1^2+\eta_2^2+\|\nabla(\chi-u_h)^+\|^2
-\sum_{T\in\bC_h}\int_T \sigma_h(\chi-u_h)^-\,dx \Big).
\end{align*}
\end{theorem}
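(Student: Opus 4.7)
The plan is to chain together the three preparatory lemmas in a single combination that absorbs the residual dual norm on the left-hand side. Specifically, Lemma \ref{lem:ErrorRelation} already expresses the combined error norm $\|\nabla(u-u_h)\|^2+\|\sigma-\sigma_h\|_{H^{-1}(\Omega)}^2$ in terms of $\|\mathcal{R}_h\|_{H^{-1}(\Omega)}^2$ and the duality pairing $-6\langle\sigma-\sigma_h,u-u_h\rangle$, so this identity is the natural skeleton of the proof.

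First I would bound the residual norm by inserting Lemma \ref{lem:GhBound} directly, producing a term of the form $5C^2(\eta_1^2+\eta_2^2)$ on the right. Next I would substitute the lower bound from Lemma \ref{lem:sigmaLower} into the term $-6\langle\sigma-\sigma_h,u-u_h\rangle$. This introduces a contribution
\[
6 \cdot \frac{1}{12}\,\|\sigma-\sigma_h\|_{H^{-1}(\Omega)}^2 \;=\;\tfrac{1}{2}\|\sigma-\sigma_h\|_{H^{-1}(\Omega)}^2,
\]
together with $6C\|\nabla(\chi-u_h)^+\|^2$ and $-6\sum_{T\in\bC_h}\int_T\sigma_h(\chi-u_h)^-\,dx$.

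The key algebraic step is then to move the $\tfrac{1}{2}\|\sigma-\sigma_h\|_{H^{-1}(\Omega)}^2$ to the left-hand side and absorb it, leaving $\tfrac{1}{2}\|\sigma-\sigma_h\|_{H^{-1}(\Omega)}^2$ there (so the constant doubles after multiplying through by $2$). The resulting inequality immediately takes the advertised form with a generic constant $C$ on the right that collects the numerical constants and the $C$'s from Lemmas \ref{lem:GhBound} and \ref{lem:sigmaLower}.

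I do not expect any genuine obstacle here: all three preparatory lemmas have been stated in exactly the form needed, and the balance between the coefficient $6$ in Lemma \ref{lem:ErrorRelation} and the coefficient $\tfrac{1}{12}$ in Lemma \ref{lem:sigmaLower} was evidently chosen so that the dual-norm term can be absorbed. The only care required is to keep the sign of the contact-term contribution consistent (note that $\sigma_h\le 0$ while $(\chi-u_h)^-\ge 0$, so $-\int_T\sigma_h(\chi-u_h)^-$ is a nonnegative quantity on the right-hand side, matching its role as an estimator) and to track that the two occurrences of $C$ from different lemmas can be merged into a single generic constant.
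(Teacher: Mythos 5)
Your proposal is correct and follows exactly the route the paper intends: the paper states Theorem \ref{thm:Apost-Est} as an immediate consequence of Lemmas \ref{lem:ErrorRelation}, \ref{lem:GhBound} and \ref{lem:sigmaLower}, which is precisely the chaining and absorption argument you describe. The arithmetic (the factor $6\cdot\tfrac{1}{12}=\tfrac12$ being absorbed on the left) and the sign check on the contact term are both right.
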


The following local efficiency estimates can be proved easily
using the bubble function techniques and the definition of
$\sigma$ in \eqref{eq:sigmadef}:
\begin{lemma}\label{lem:Efficiency}
There hold
\begin{align*}
&h_T\|f+\Delta u_h-\sigma_h\|_{L^2(T)}\leq C \left(\|\nabla
(u-u_h)\|_{L^2(T)}+\|\sigma-\sigma_h\|_{H^{-1}(T)}+h_T\inf_{\bar f\in\bbP_0(T)}\|f-\bar f\|_{L^2(T)}\right)\\
&h_e^{1/2}\|\sjump{\nabla u_h}\|_{L^2(e)}\leq C \left(\|\nabla
(u-u_h)\|_{L^2(T_e)}+\|\sigma-\sigma_h\|_{H^{-1}(T_e)}+h_e\inf_{\bar
f\in\bbP_0(T_e)}\|f-\bar f\|_{L^2(T_e)}\right),
\end{align*}
where $T_e$ is the patch of the face $e\in \cE_h^i$.
\end{lemma}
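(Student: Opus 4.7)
\textbf{Proof proposal for Lemma \ref{lem:Efficiency}.} The plan is to use the standard residual equation combined with the bubble function technique of Verf\"urth, adapted to account for the fact that here the Lagrange multiplier defect $\sigma-\sigma_h$ enters in the $H^{-1}$ norm rather than in $L^2$. The fundamental identity I will rely on is obtained by testing the definition \eqref{eq:sigmadef} of $\sigma$ against $\phi\in H^1_0(T)$ (extended by zero), integrating the term $a(u_h,\phi)$ by parts elementwise (the boundary contributions vanish), and writing
\begin{align*}
\int_T (f+\Delta u_h-\sigma_h)\phi\,dx
=(\nabla(u-u_h),\nabla\phi)_T+\langle\sigma-\sigma_h,\phi\rangle_T
\qquad\forall\,\phi\in H^1_0(T).
\end{align*}
Since $u_h|_T\in\bbP_2(T)\oplus\mathrm{span}\{b_T\}$ and $b_T\in\bbP_4(T)$, the residual $\Delta u_h-\sigma_h$ is a polynomial on $T$, and after replacing $f$ by its $L^2$-best constant approximation $\bar f$ the quantity $\bar f+\Delta u_h-\sigma_h$ is a polynomial of bounded degree on $T$, to which the bubble-function machinery applies cleanly.

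First I would prove the elementwise estimate. Let $\psi_T$ denote the interior cubic bubble on $T$ (a rescaled product of the four barycentric coordinates), and set $\phi=(\bar f+\Delta u_h-\sigma_h)\psi_T$, extended by zero outside $T$. By norm equivalence on the finite-dimensional polynomial space,
\begin{align*}
\|\bar f+\Delta u_h-\sigma_h\|_{L^2(T)}^2\le C\int_T(\bar f+\Delta u_h-\sigma_h)^2\psi_T\,dx,
\end{align*}
and the right-hand side equals $(R_h,\phi)_T+(\bar f-f,\phi)_T$ with $R_h=f+\Delta u_h-\sigma_h$. Applying the residual identity above to $(R_h,\phi)_T$, bounding $\langle\sigma-\sigma_h,\phi\rangle_T\le\|\sigma-\sigma_h\|_{H^{-1}(T)}\|\nabla\phi\|_{L^2(T)}$, and then invoking the inverse estimate $\|\nabla\phi\|_{L^2(T)}\le Ch_T^{-1}\|\phi\|_{L^2(T)}$ together with $\|\phi\|_{L^2(T)}\le\|\bar f+\Delta u_h-\sigma_h\|_{L^2(T)}$, division and triangle inequality yield the first bound.

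For the face estimator, I would fix $e\in\cE_h^i$ with patch $T_e=T_+\cup T_-$, let $\psi_e$ be a face bubble supported in $T_e$ that vanishes on $\partial T_e$, and pick an extension $\widetilde{J}_e$ of the polynomial jump $\sjump{\nabla u_h}$ from $e$ to $T_e$. Taking $\phi=\widetilde{J}_e\psi_e\in H^1_0(T_e)$ and again using norm equivalence,
\begin{align*}
\|\sjump{\nabla u_h}\|_{L^2(e)}^2\le C\int_e\sjump{\nabla u_h}^2\psi_e\,ds=C\int_e\sjump{\nabla u_h}\phi\,ds.
\end{align*}
Piecewise integration by parts on $T_+$ and $T_-$ converts this face integral into $\sum_{T\subset T_e}[(R_h,\phi)_T-(\bar f+\Delta u_h-\sigma_h,\phi)_T+(\bar f-f,\phi)_T]$; using the residual identity on each $T\subset T_e$, the already-established interior bound, the inverse estimates $\|\phi\|_{L^2(T_e)}\le Ch_e^{1/2}\|\sjump{\nabla u_h}\|_{L^2(e)}$ and $\|\nabla\phi\|_{L^2(T_e)}\le Ch_e^{-1/2}\|\sjump{\nabla u_h}\|_{L^2(e)}$, and dividing, the second estimate follows.

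The main obstacle, and the reason some care is required beyond the classical elliptic template, is that $\sigma-\sigma_h$ is only controlled in $H^{-1}$; this forces one to commit to polynomial test functions $\phi$ and to use inverse inequalities to trade $\|\nabla\phi\|_{L^2}$ for $h_T^{-1}\|\phi\|_{L^2}$, which is what generates the scaling factor $h_T$ in front of $\|f+\Delta u_h-\sigma_h\|_{L^2(T)}$ on the left-hand side. The data oscillation $h_T\inf_{\bar f\in\bbP_0(T)}\|f-\bar f\|_{L^2(T)}$ simply absorbs the replacement of $f$ by $\bar f$ in the polynomial-norm equivalence step.
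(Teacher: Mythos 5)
Your overall strategy is exactly the one the paper has in mind: the paper gives no written proof of Lemma \ref{lem:Efficiency}, saying only that it can be proved using bubble function techniques and the definition of $\sigma$ in \eqref{eq:sigmadef}, and your residual identity $(f+\Delta u_h-\sigma_h,\phi)_T=(\nabla(u-u_h),\nabla\phi)_T+\langle\sigma-\sigma_h,\phi\rangle_T$ for $\phi\in H^1_0(T)$, combined with interior and face bubbles, an inverse inequality to trade $\|\nabla\phi\|_{L^2(T)}$ for $h_T^{-1}\|\phi\|_{L^2(T)}$ (which is indeed what produces the factor $h_T$ on the left and accommodates the $H^{-1}$ control of $\sigma-\sigma_h$), and the oscillation term to absorb $f-\bar f$, is precisely that argument. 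The interior estimate is carried out correctly; note only that the product of the four barycentric coordinates on a tetrahedron is a quartic, not a cubic, bubble, which is harmless.

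The face-jump step, however, is garbled as written. The displayed decomposition $\sum_{T\subset T_e}\bigl[(R_h,\phi)_T-(\bar f+\Delta u_h-\sigma_h,\phi)_T+(\bar f-f,\phi)_T\bigr]$ with $R_h=f+\Delta u_h-\sigma_h$ telescopes identically to zero, so it cannot equal $\int_e\sjump{\nabla u_h}\phi\,ds$; and the subsequent instruction to use the residual identity on each $T\subset T_e$ is not legitimate, because that identity was derived for $\phi\in H^1_0(T)$, whereas the face test function $\phi=\widetilde J_e\psi_e$ lies only in $H^1_0(T_e)$ and does not vanish on $e\subset\partial T_{\pm}$; the elementwise integration by parts therefore leaves exactly the face term you are trying to estimate. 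The repair is standard: integrate by parts over the whole patch against $\phi\in H^1_0(T_e)$ to obtain $\int_e\sjump{\nabla u_h}\phi\,ds=\sum_{T\subset T_e}(f+\Delta u_h-\sigma_h,\phi)_T-a(u-u_h,\phi)-\langle\sigma-\sigma_h,\phi\rangle$ (the same identity that appears in the proof of Lemma \ref{lem:GhBound}), then bound the volume terms using $\|\phi\|_{L^2(T_e)}\leq Ch_e^{1/2}\|\sjump{\nabla u_h}\|_{L^2(e)}$ together with your already-established interior estimate, and the remaining two terms using $\|\nabla\phi\|_{L^2(T_e)}\leq Ch_e^{-1/2}\|\sjump{\nabla u_h}\|_{L^2(e)}$. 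With that correction the argument is complete and coincides with the proof the paper intends.
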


\par
\noindent
\begin{remark}
In view of Braess \cite{Braess:2005:VI}, the terms $$\|\nabla(\chi-u_h)^+\|\quad\text{and}\quad-\sum_{T\in\bC_h}\int_T \sigma_h(\chi-u_h)^-\,dx$$
are of higher order.
\end{remark}

\section{Numerical Implementation}\label{sec:Numerical}
In this section, first we discuss the primal-dual active set
method and then present numerical experiment.

\subsection{Implementation procedure}
We propose the primal-dual active set method for the numerical
experiments. In the light of the algorithm in
\cite{HK:2003:activeset}, we develop the following algorithm for
solving the 3D-obstacle problem by the quadratic finite element
method developed in this article. For the given mesh size $h$, let
$\cT_h$ be the simplicial triangulation of $\Omega\subset \R^3$
with number of simplices denoted by $M$. Let the simplices be
enumerated by $\{T_j\}_{\{1\leq j\leq M\}}$. Let $N$ be the
dimension of $V_h$ and $\{\phi_i\}_{\{1 \leq i\leq N\}}$ be its
basis. Denote by $A=[A_{ij}]_{\{1\leq i,j\leq N\}}$ the stiffness
matrix, where
\begin{align*}
A_{ij}=(\nabla\phi_j, \nabla\phi_i).
\end{align*}
Define the matrix $B=[B_{ij}]_{\{1\leq i\leq M,1\leq j\leq N\}}$
where
\begin{align*}
B_{ij}=\frac{1}{|T_j|}\int_{T_j}\phi_i\,dx.
\end{align*}
The load vector $b=[b_i]_{\{1\leq i\leq N\}}$ is defined as
\begin{align*}
b_{i}=(f,\phi_i).
\end{align*}
Also define $\gamma=[\gamma_j]_{\{1\leq j\leq M\}}$, where
\begin{align*}
\gamma_j=\frac{1}{|T_j|}\int_{T_j}\chi\,dx.
\end{align*}

\par
\noindent Let the discrete solution $u_h\in V_h$ be represented by
\begin{align*}
u_h=\sum_{i=1}^N\alpha_i\phi_i.
\end{align*}

\par
\noindent The Lagrange multiplier $\sigma_h\in V_{pc,h}$ which
will be written as
\begin{align*}
\sigma_h=\sum_{j=1}^M \beta_j \psi_j,
\end{align*}
where $\psi_j$ is the characteristic function of $T_j$ defined by
\begin{align*}
A\alpha+B\beta=b,
\end{align*}
with $\alpha=[\alpha_i]_{\{1\leq i\leq N\}}$ and $\beta=[\beta_j]_{\{1\leq j\leq M\}}$.
The complementarity conditions are given by
\begin{align}
\beta^T (B^T\alpha-\gamma)=0,\quad \beta\leq 0,\quad \text{and }B^T\alpha-\gamma\geq 0.
\end{align}

\par
\noindent
The complementarity conditions can be written as
\begin{align*}
C(\alpha,\beta)=0,
\end{align*}
where
\begin{align*}
C(\alpha,\beta)=\beta-\min\{0,\beta+c(B^T\alpha-\gamma)\},
\end{align*}
for some $c>0$. Finally let $\Lambda=\{1,2,\cdots, M\}$ be the index set
of mesh elements $T_j\in\cT_h$.

\smallskip
\par
\noindent The primal-dual active set algorithm is defined as
follows:

\begin{algorithm}\label{alg:1}
Initialize $\alpha^0$ and $\beta^0$. Let $k=1$,
$\alpha^1=\alpha^0$ and $\beta^1=\beta^0$. For $k\geq 1$, perform
the following  steps:

\item[{\em Step 1.}] Find $A_k=\{j\in\Lambda :
\beta_j^k+c(B^T\alpha^k-\gamma)_j<0\}$ and $I_k=\{j\in \Lambda:
\beta_j^k+c(B^T\alpha^k-\gamma)_j>=0\}$.

\item[{\em Step 2.}] Solve the system
\begin{align*}
A\alpha+B\beta &=b \\
B^T\alpha&=\gamma \quad\text{ on } A_k\\
\beta &=0\quad\text{ on } I_k.
\end{align*}
\item[{\em Step 3.}] Stop or set $k=k+1$, $\alpha^k=\alpha$ and
$\beta^k=\beta$, where $\alpha$ and $\beta$ are the solutions of
the system in {\em Step 2}.
\end{algorithm}

\subsection{Numerical Experiments}\label{sec:Numerics}
In this section, we present some numerical experiments to
illustrate the theoretical results derived in this article. For
this, we consider the computational domain to be the unit cube
$\Omega=(0,1)^3$ in $\R^3$ and the obstacle function to be
$\chi\equiv 0$. Further the force function $f$ is taken as
\begin{equation}\label{eq:uht}
f(x,y,z) := \left\{ \begin{array}{ll} -4(2r^2+3(r^2-r_0^2)) & \text{ if } r>r_0,\\\\
-8r_0^2(1-r^2+r_0^2) & \text{ if } r\leq r_0,
\end{array}\right.
\end{equation}
where $r=(x^2+y^2+z^2)^{1/2}$ and $r_0=7$. The nonhomogeneous
Dirichlet boundary condition is taken in such a way that the
solution $u$ is given by $u(x,y,z)=(\max(r^2-r_0^2,0))^2$. The
{\em Algorithm} \ref{alg:1} is used in computations with $c=1$ in
the {\em Step 1} therein. In the experiment, we compute the order
of convergence in the energy norm to test the performance of the
result in Theorem \ref{thm:Apriori}. We begin with an initial mesh
given in Figure \ref{fig:imesh} and generate an array of uniformly
refined meshes by tetrahedrons by dividing each tetrahedron into
12 tetrahedrons. We compute the discrete solution on these meshes
and then compute the corresponding errors using a quadrature formula
that is exact for cubic polynomials. The results are depicted in
the Table \ref{table:H1e}. The results match closely with the
theoretical results. We have developed our in-house MATLAB code
for this experiment. The discrete  and the exact (interpolation)
solutions are plotted in Figure \ref{fig:solu} on the mesh with
mesh $h=0.433$ (around 1.03 Lakh tetrahedrons).

\begin{figure}[!ht]
\begin{center}
\includegraphics[width=8cm,height=7cm]{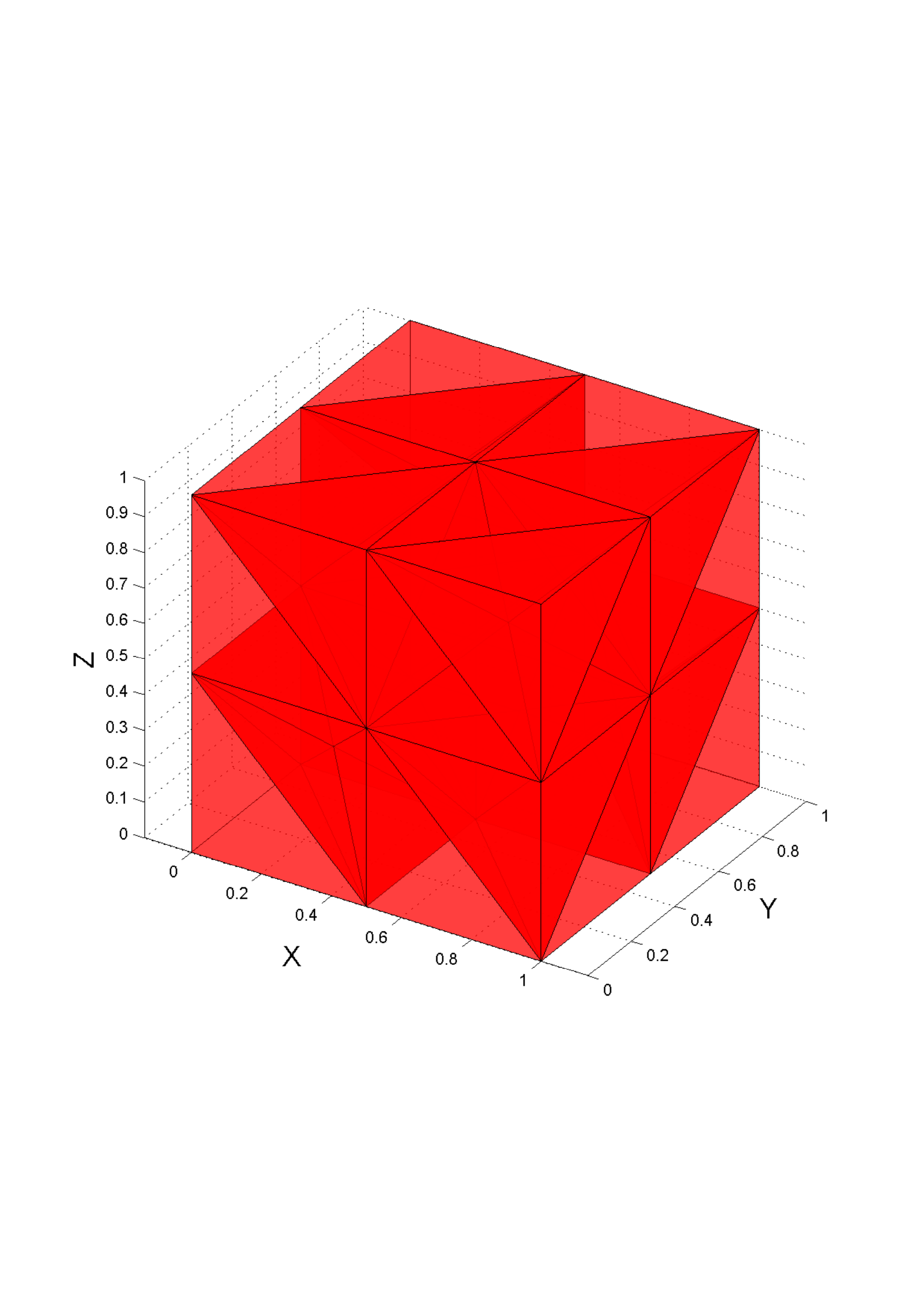}
\caption{The initial mesh in computations}\label{fig:imesh}
\end{center}
\end{figure}

\begin{table}[h!!]
 {\small{\footnotesize
\begin{center}
\begin{tabular}{|c|c|c|}\hline
 $h$ & $\|\nabla(u-u_h)\|_{L^2(\Omega)} $  & order  \\
\hline\\[-12pt]
 0.3467   & 1.8500e-001  &  --       \\
 0.1733   & 5.6046e-002  & 1.3596    \\
 0.0867   & 1.9210e-002  & 1.4112     \\
 0.0433   & 7.1151e-003  & 1.3636    \\
\hline
\end{tabular}
\end{center}
}}
\par\medskip
\caption{Errors and order of convergence in  $H^1$ norm }
\label{table:H1e}
\end{table}

\begin{figure}[!ht]
\begin{center}
\includegraphics[width=10cm,height=7cm]{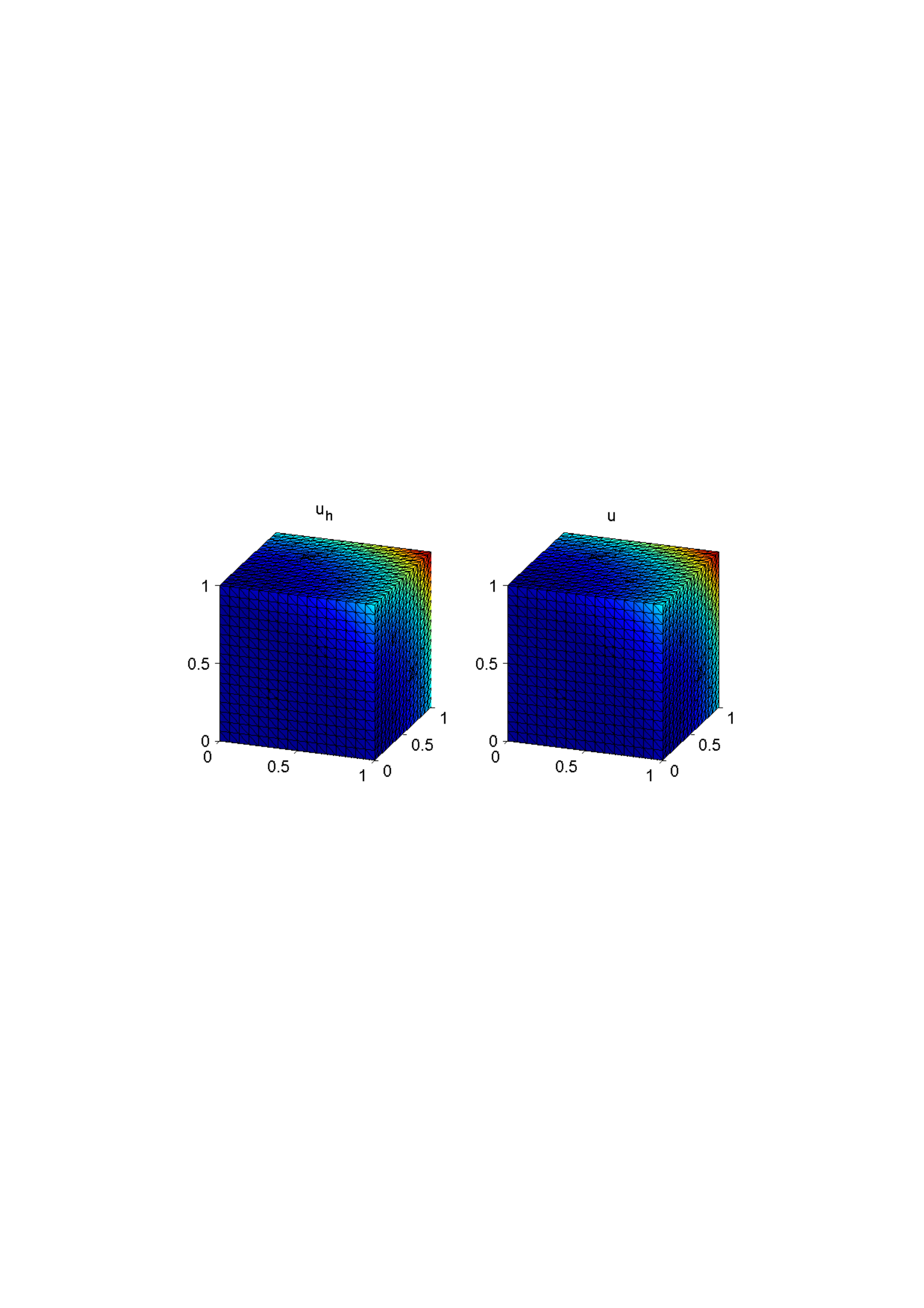}
\caption{The comparison between the computed (left) and the exact
(right) solution}\label{fig:solu}
\end{center}
\end{figure}

\par
\noindent
 Numerical experiments to test the performance
of a posteriori error estimates will be discussed in the future
work.

\section{Conclusions}\label{sec:Conclusions}
We have developed a quadratic finite element method for the three
dimensional elliptic obstacle problem. The finite element space is
constructed by using the standard $P_2$ Lagrange finite element
and a space of element-wise bubble functions. This enables us to
prove optimal order (with respect to the regularity) error
estimate in the energy norm. A posteriori error estimates are
derived by constructing a suitable Lagrange multiplier. Further, a
primal-dual active set method is proposed for the numerical
implementation and a numerical experiment is presented to
illustrate the theoretical result on {\em a priori} error
estimate.

%

\end{document}